\newtheorem{theorem}{Theorem}
\newtheorem{proposition}[theorem]{Proposition}
\newtheorem{lemma}[theorem]{Lemma}
\newtheorem{observation}[theorem]{Observation}
\newtheorem{corollary}[theorem]{Corollary}
\newtheorem{definition}[theorem]{Definition}
\newtheorem{question}[theorem]{Question}
\newcommand{\MD}{MD}
\newcommand{\LD}{LD}
\newcommand{\M}{CD}
\newcommand{\SC}{SC}
\newcommand{\optpb}[3]{
        \noindent\begin{minipage}{\textwidth}
                #1\\
                INSTANCE: #2\\ 
                TASK: #3
        \end{minipage}\vspace{\baselineskip}
}
\begin{document}

\title{Centroidal bases in graphs\thanks{The authors acknowledge the financial support from the Programme IdEx Bordeaux – CPU (ANR-10-IDEX-03-02).}}
\author{Florent Foucaud$^{a,b,c}$, Ralf Klasing$^d$ and Peter J. Slater$^{e,f}$\\ \\
{\small $a$: Universitat Politècnica de Catalunya, Carrer Jordi Girona 1-3, 08034 Barcelona, Spain.} \\
{\small $b$ Department of Mathematics, University of Johannesburg, Auckland Park 2006, South Africa.}\\
{\small $c$ LAMSADE - CNRS UMR 7243, PSL, Universit\'e Paris-Dauphine, F-75016 Paris, France.}\\
{\small $d$: CNRS - LaBRI UMR 5800 - Université de Bordeaux, F-33400 Talence, France.} \\
{\small $e$: Computer Science Department, University of Alabama in Huntsville, Huntsville, AL 35899, USA.} \\
{\small $f$: Mathematical Sciences Department, University of Alabama in Huntsville, Huntsville, AL 35899, USA.}\\
{\small (e-mails: {\tt florent.foucaud@gmail.com, ralf.klasing@labri.fr, pslater@cs.uah.edu/slaterp@uah.edu)}}
}

\maketitle

\begin{abstract}
We introduce the notion of a centroidal locating set of a graph $G$,
that is, a set $L$ of vertices such that all vertices in $G$ are
uniquely determined by their relative distances to the vertices of
$L$. A centroidal locating set of $G$ of minimum size is called a
centroidal basis, and its size is the centroidal dimension
$\M(G)$. This notion, which is related to previous concepts, gives a
new way of identifying the vertices of a graph. The centroidal
dimension of a graph $G$ is lower- and upper-bounded by the metric
dimension and twice the location-domination number of $G$,
respectively. The latter two parameters are standard and well-studied
notions in the field of graph identification.

We show that for any graph $G$ with $n$ vertices and maximum degree at
least~2, $(1+o(1))\frac{\ln n}{\ln\ln n}\leq \M(G) \leq n-1$. We
discuss the tightness of these bounds and in particular, we
characterize the set of graphs reaching the upper bound. We then show
that for graphs in which every pair of vertices is connected via a
bounded number of paths, $\M(G)=\Omega\left(\sqrt{|E(G)|}\right)$, the
bound being tight for paths and cycles. We finally investigate the
computational complexity of determining $\M(G)$ for an input graph
$G$, showing that the problem is hard and cannot even be approximated
efficiently up to a factor of $o(\log n)$. We also give an
$O\left(\sqrt{n\ln n}\right)$-approximation algorithm.

\end{abstract}

\section{Introduction}

A large body of work has evolved concerning the problem of identifying
an ``intruder'' vertex in a graph. As examples, one might seek to
identify a malfunctioning processor in a multiprocessor network, or
the location of an intruder such as a thief, saboteur or fire in a
graph-modeled facility. In this paper, we introduce the new model of
\emph{centroidal detection} as such a graph identification problem.

An early model considered the case where one could place detection
devices like sonar or LORAN stations at vertices in a graph; each
detection device could determine the distance to the intruder's vertex
location. As introduced independently in Slater~\cite{S75} and Harary
and Melter~\cite{HM76}, vertex set $L=\{w_1,\ldots,w_k\}\subseteq
V(G)$ is a \emph{locating set} (also called \emph{resolving set} in
the literature) if for each vertex $v\in V(G)$, the (ordered)
$k$-tuple $(d(v,w_1),\ldots,d(v,w_k))$ of distances between the
detector's locations and the intruder vertex $v$ uniquely determines
$v$. A minimum cardinality locating set is called a \emph{metric
  basis} (also called \emph{reference set} in the literature), and its
order is the \emph{metric dimension} of $G$, denoted by
$\MD(G)$. Other studies involving metric bases include
for example~\cite{BC,CEJO00,HN,KRR96,ST04}. Carson~\cite{Carson} and,
independently, Delmas, Gravier, Montassier and Parreau~\cite{DGMP11}
(for the latter authors, under the name of \emph{light $r$-codes})
considered the case in which each detection device at $w_i$ can only
detect an intruder at distance at most~$r$.

In another model, the presence of any edge $\{u,v\}\in E(G)$ indicates
that a detection device at $u$ is able to detect an intruder at $v$.
Let us denote by $N(u)$ and $N[u]$ the open and the closed
neighbourhood of vertex $u$, respectively.  A set $D\subseteq V(G)$ is
a \emph{dominating set} if $\bigcup_{u\in D}N[u]=V(G)$.  Clearly, in
the latter model, if every possible intruder location must be
detectable, the set of detector locations must form a dominating set.

The concepts of locating and dominating set were merged in
Slater~\cite{S87,S88}: when a detection device at vertex $u$ can
distinguish between there being an intruder at $u$ or at a vertex in
$N(u)$ (but which precise vertex in $N(u)$ cannot be determined), then
we have the concept of a \emph{locating-dominating set}. More
precisely, a set $D$ of vertices is locating-dominating if it is
dominating and every vertex in $V(G)\setminus D$ is dominated by a
distinct subset of $D$.  The minimum cardinality of a
locating-dominating set of graph $G$ is denoted $\LD(G)$. When one can
only decide if there is an intruder somewhere in $N[u]$, one is
interested in an \emph{identifying code}, as introduced by Karpovsky,
Chakrabarty and Levitin~\cite{KCL98}. Haynes, Henning and
Howard~\cite{HHH06} added the condition that the locating-dominating
set or identifying code not have any isolated vertices. When a
detection device at $u$ can determine that an intruder is in $N(u)$,
but will not report if the intruder is at $u$ itself, one is
interested in \emph{open-locating-dominating sets}, as introduced for
the hypercube by Honkala, Laihonen and Ranto~\cite{HLR02} and for all
graphs by Seo and Slater~\cite{SS10,SS11}. A bibliography of related
papers is maintained by Lobstein~\cite{biblio}.

In what follows, we will denote the path and the cycle on $n$ vertices
by $P_n$ and $C_n$, respectively.

In this paper, we introduce the study of \emph{centroidal bases}. In
this model, we assume that detection devices have unlimited range, as
for metric bases. However, exact distances to the intruder are not
known, but for detection devices $u,v$, if there is an intruder at
vertex $x$, then the presence of the intruder in the graph is
determined earlier by $u$ than by $v$ when $u$ is closer to $x$ than
$v$, that is, when $d(u,x)<d(v,x)$. When $d(u,x)=d(v,x)$ we assume that
$u$ and $v$ report simultaneously.

\begin{figure}[!htpb]
\centering
\scalebox{1.0}{\begin{tikzpicture}[join=bevel,inner sep=0.5mm,scale=1.0,line width=0.5pt]\path (0,0) node[draw,shape=circle,fill=black] (a) {};
\draw (a) node[above=0.1cm] {$x_1$};
\path (1,0) node[draw,shape=circle] (b) {};
\draw (b) node[above=0.1cm] {$x_2$};
\path (2,0) node[draw,shape=circle,fill=black] (c) {};
\draw (c) node[above=0.1cm] {$x_3$};
\path (3,0) node[draw,shape=circle] (d) {};
\draw (d) node[above=0.1cm] {$x_4$};
\path (4,0) node[draw,shape=circle] (e) {};
\draw (e) node[above=0.1cm] {$x_5$};
\path (5,0) node[draw,shape=circle,fill=black] (f) {};
\draw (f) node[above=0.1cm] {$x_6$};
\path (6,0) node[draw,shape=circle] (g) {};
\draw (g) node[above=0.1cm] {$x_7$};
\path (7,0) node[draw,shape=circle,fill=black] (h) {};
\draw (h) node[above=0.1cm] {$x_8$};
\draw (a) -- (b) -- (c) -- (d) -- (e) -- (f) -- (g) -- (h);
\end{tikzpicture}}
\caption{Centroidal basis $\{x_1,x_3,x_6,x_8\}$ for path $P_8$.}\label{fig:P8}
\end{figure}
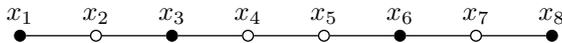

For example, consider $S=\{x_1,x_3,x_6,x_8\}\subseteq V(P_8)$ (see
Figure~\ref{fig:P8}). For vertex $x_4$, the order in which the
detectors of $S$ report is $(x_3,x_6,x_1,x_8)$ because
$d(x_4,x_3)=1<d(x_4,x_6)=2<d(x_4,x_1)=3<d(x_4,x_8)=4$. For vertex
$x_2$, we have $d(x_2,x_1)=d(x_2,x_3)$, hence the order of reporting
is $(\{x_1,x_3\},x_6,x_8)$. The smallest size of a set $S\subseteq
V(P_8)$ for which the order of reporting uniquely identifies each
vertex is, in fact, four.


\subsection{Medians and centroids}\label{sec:centroid}

In 1869, Jordan~\cite{J69} showed that each of the center and the (branch
weight) centroid of a tree either consists of one vertex, or of two
adjacent vertices. The \emph{eccentricity} of vertex $u$ is the
maximum distance from $u$ to another vertex in graph $G$,
$e(u)=\max\{d(u,v),v\in V(G)\}$, and the \emph{center} $\mathcal C(G)$
of $G$ is the set of vertices of minimum eccentricity. For a tree $T$,
the \emph{branch weight} of $u$, $bw(u)$, is the maximum number of
edges in a subtree with $u$ as an endpoint. The \emph{branch weight
  centroid} of $T$ is the set $\mathcal S_{bw}(T)$ of vertices with minimum
branch weight in $T$.

In 1964, Hakimi~\cite{H64} considered two facility location problems, one
involving the center. The second one involved the
\emph{distance} $d(u)=\sum_{v\in V(G)}d(u,v)$, measuring the total
response time at $u$ (this notion was called \emph{status} of $v$ by
Harary in 1959~\cite{H59}). The \emph{median} of $G$, $\mathcal
M(G)=\{u\in V(G),\forall v\in V(G), d(u)\leq d(v)\}$, is the set of
vertices of minimum distance in $G$. In 1968, Zelinka~\cite{Z68}
showed that for any tree $T$, $\mathcal M(T)=\mathcal S_{bw}(T)$,
which seemed to imply that the median would be a good generalization
of the branch weight centroid of a tree to an arbitrary graph. However
(with details in Slater~\cite{S80,S99} and Slater and Smart~\cite{SS99}),
note that components of $T-u$ of the same order, one being a path and
the other a star, contribute the same value of a branch weight, but
have much different distances. In trying to keep closer to the spirit
of what the branch weight centroid measures, the \emph{centroid}
$\mathcal S(G)$ of an arbitrary graph $G$ was defined in terms of
competitive facility location~\cite{S75b}. For facilities located at
vertices $u$ and $v$, a customer at vertex $x$ is interested in which
of the facilities is the closer. As defined in Slater~\cite{S75b}, the
set $V_{u,v}=\{x\in V(G), d(x,u)<d(x,v)\}$ is the set of vertex
customer locations strictly closer to $u$ than to $v$. Then,
$f(u,v)=|V_{u,v}|-|V_{v,u}|$ rates how well $u$ does as a facility
location, in comparison to $v$. Letting $f(u)=\min\{f(u,v), v\in
V(G)-u\}$, the \emph{centroid} of a graph $G$ is the set $\mathcal
S(G)=\{u\in V(G), \forall v\in V(G)-u, f(u)\geq f(v)\}$. When $G$ is a
tree, the centroid and branch weight centroid are easily seen to
coincide. Interestingly, there are graphs $G$ for which $f(u)<0$ for
all $u\in V(G)$.

Note that in our context of centroidal bases, detectors located at $u$
and $v$ enable us to determine if an intruder is in $V_{u,v}$, in
$V_{v,u}$, or in $V(G)-V_{u,v}-V_{v,u}=\{x\in V(G), d(x,u)=d(x,v)\}$. 

\subsection{Centroidal detection}\label{sec:def}

Let $B=\{w_1,\ldots,w_k\}\subseteq V(G)$ be a set of vertices of graph
$G$ with detection devices located at each $w_i$. As noted, we will
assume that each detection device has an unlimited range --- an
intruder entering at any vertex $x$ will, at some point, have its
presence noted at each $w_i$. Simply the presence will be noted, with
no information about the location of the intruder. In particular,
unlike in the setting of metric bases, $d(x,w_i)$ will not be
known. However, the time it takes before $w_i$ detects the intruder at
$x$ will be an increasing function of the distance $d(x,w_i)$. That
is, $w_i$ will indicate an intruder presence before $w_j$ whenever
$d(x,w_i)<d(x,w_j)$ (in our previous terminology, $x\in
V_{w_i,w_j}$). We will say that $x$ is \emph{located} first by $w_i$,
and then by $w_j$. Thus, each vertex $x$ has a rank ordering $r(x)$ of
the elements of a partition of $B$ (in fact, $r(x)$ is an \emph{ordered
  partition} of $B$, that is, an ordered set of disjoint subsets of
$B$ whose union is $B$). This ordering lists all the elements of $B$
in non-decreasing order by their distance from $x$, with ties
noted. Note that the number of ordered partitions of a set $B$ of $k$
elements is the \emph{$k$-th ordered Bell number}, denoted $b(k)$ (see
the book of Wilf \cite[Section 5.2, Example 1]{orderedBell}).

For $B=\{x_1,x_3,x_6,x_8\}$ in path $P_8$ (see Figure~\ref{fig:P8}),
$r(x_1)=(x_1,x_3,x_6,x_8)$, $r(x_2)=(\{x_1,x_3\},x_6,x_8)$,
$r(x_3)=(x_3,x_1,x_6,x_8)$, $r(x_4)=(x_3,x_6,x_1,x_8)$,
$r(x_5)=(x_6,x_3,x_8,x_1)$, $r(x_6)=(x_6,x_8,x_3,x_1)$,
$r(x_7)=(\{x_6,x_8\},x_3,x_1)$ and $r(x_8)=(x_8,x_6,x_3,x_1)$.

\begin{definition}
Vertex set $B\subseteq V(G)$ is called a \emph{centroidal locating set}
of graph $G$ if $r(x)\neq r(y)$ for every pair $x,y$ of distinct
vertices. A \emph{centroidal basis} of $G$ is a centroidal locating
set of minimum cardinality. The \emph{centroidal dimension} of $G$,
denoted $\M(G)$, is the cardinality of a centroidal basis.
\end{definition}

In our example, $B=\{x_1,x_3,x_6,x_8\}$ is the unique centroidal basis
of path $P_8$, and $\M(P_8)=4$.

Observe that every graph $G$ has a centroidal locating set,
for example, $V(G)$: each vertex $x$ is the only vertex having $x$ as the
first element of $r(x)$.

A useful reformulation of the definition of a
centroidal locating set is as follows:

\begin{observation}\label{obs:otherdef}
A set $B$ of vertices of a graph $V(G)$ is a centroidal locating set
if and only if for every pair $x,y$ of distinct vertices of $V(G)$,
there exist two vertices $b_1,b_2$ in $B$ such that either
$d(x,b_1)\leq d(x,b_2)$ but $d(y,b_1)>d(y,b_2)$, or $d(y,b_1)\leq
d(y,b_2)$ and $d(x,b_1)>d(x,b_2)$ (in other words, $y\in V_{b_2,b_1}$
but not $x$, or $x\in V_{b_2,b_1}$, but not $y$).
\end{observation}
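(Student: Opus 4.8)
The plan is to recognize the ordered partition $r(x)$ as a compact encoding of the total preorder on $B$ that ranks its elements by their distance from $x$, and then to show that two such encodings coincide exactly when all pairwise distance-comparisons agree. For a fixed vertex $x$, define a relation $\preceq_x$ on $B$ by setting $b_1\preceq_x b_2$ whenever $d(x,b_1)\leq d(x,b_2)$. Since distances are real numbers, $\preceq_x$ is reflexive, transitive and total, hence a total preorder; its equivalence classes (the sets of elements of $B$ equidistant from $x$), listed in increasing order, are precisely the blocks of $r(x)$. Thus $r(x)$ and $\preceq_x$ carry the same information, and $r(x)=r(y)$ holds if and only if $\preceq_x$ and $\preceq_y$ are the same relation on $B$.

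With this reduction in hand, I would make the comparison explicit: $\preceq_x=\preceq_y$ means that for every ordered pair $(b_1,b_2)$ of elements of $B$ we have $d(x,b_1)\leq d(x,b_2)$ if and only if $d(y,b_1)\leq d(y,b_2)$. Negating this biconditional gives the condition for $r(x)\neq r(y)$: there is a pair $(b_1,b_2)$ for which exactly one of the two inequalities holds. Unpacking ``exactly one holds'' produces precisely the two disjuncts in the statement---either $d(x,b_1)\leq d(x,b_2)$ while $d(y,b_1)>d(y,b_2)$, or $d(y,b_1)\leq d(y,b_2)$ while $d(x,b_1)>d(x,b_2)$. Quantifying over all pairs then finishes the argument, since $B$ is a centroidal locating set exactly when $r(x)\neq r(y)$ for all distinct $x,y$.

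The argument is essentially bookkeeping, so the only point requiring care---the part I would treat as the main obstacle---is verifying that the ordered partition and the total preorder determine each other faithfully, and that the three mutually exclusive relations $<$, $=$ and $>$ on a pair $(b_1,b_2)$ are tracked correctly when the biconditional is negated. The freedom to choose which witnessing vertex plays the role of $b_1$ and which plays $b_2$ is what allows the two disjuncts to cover every way in which $\preceq_x$ and $\preceq_y$ can disagree; for instance, a pair that is tied for $x$ but strictly ordered for $y$ is caught by one disjunct after swapping $b_1$ and $b_2$.
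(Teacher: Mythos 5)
Your proof is correct: identifying $r(x)$ with the total preorder $b_1\preceq_x b_2 \iff d(x,b_1)\leq d(x,b_2)$ and then negating the pairwise biconditional is exactly the content of the observation, and your attention to swapping the roles of $b_1$ and $b_2$ (e.g.\ for pairs tied under $x$ but not under $y$) handles the only delicate case. The paper states this as an observation with no proof at all, treating it as immediate from the definition, so your argument simply supplies the bookkeeping the authors left implicit --- there is no divergence in approach.
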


\subsection{Structure of the paper}

We start in Section~\ref{sec:prelim} by stating some preliminary
observations and lemmas, and by giving bounds on parameter $\M$
involving the order, the diameter, and other parameters of graphs; in
particular, we show that $(1+o(1))\frac{\ln n}{\ln\ln n}\leq \M(G)
\leq n-1$ when $G$ has $n$ vertices and maximum degree at least~2.

In Section~\ref{sec:tightness}, we discuss the tightness of the two
aforementioned bounds by constructing graphs with small centroidal
dimension, and by fully characterizing the graphs having centroidal
dimension $n-1$.

In Section~\ref{sec:paths-cycles}, we give a lower bound
$\M(G)=\Omega\left(\sqrt{\frac{m}{k}}\right)$ when $G$ has $m$ edges and
every pair of vertices is connected by a small number, $k$, of paths.
We show that the bound is tight (up to a constant factor) for paths
and cycles.

Finally, in Section~\ref{sec:complex}, we discuss the computational
complexity of finding a centroidal basis; we show that for graphs with
$n$ vertices, it is \textsf{NP}-hard to compute an
$o(\ln n)$-approximate solution, and describe an
$O(\sqrt{n\ln n})$-approximation algorithm. We also remark that the
problem is fixed-parameter-tractable when parameterized by the
solution size.

\section{Preliminaries and bounds}\label{sec:prelim}

In this section, we give a series of preliminary lemmas and bounds for
parameter $\M$ that will prove useful later on, and also help the
reader become familiar with some of the aspects of the problem.

\subsection{Preliminary results}

We state a few lemmas that will prove very useful in the study
of centroidal locating sets.

\begin{lemma}
Let $G$ be a graph. The following statements are true:\\
(a) If $u$ is a vertex of degree~1, then any centroidal
locating set of $G$ contains $u$.\label{lemma:deg1}\\
(b) If $u$ is a vertex of degree~1 having a
neighbour $v$ of degree~2, then any centroidal locating set of $G$
contains either $v$ or a neighbour of $v$ other than $u$.\label{lemma:deg1-2}\\
(c) If $u,v$ are two vertices with $N(u)=N(v)$ or $N[u]=N[v]$, then any
centroidal locating set of $G$ contains at least one of $u$ and $v$.\label{lemma:twins}
\end{lemma}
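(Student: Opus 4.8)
The plan is to prove each of the three parts by a contradiction argument, in each case exhibiting two distinct vertices whose rank orderings $r(\cdot)$ coincide, so that the purported locating set fails the defining condition of Observation~\ref{obs:otherdef}. The unifying idea is that a vertex of degree~$1$ (or a pair of twins) creates a pair of vertices that are ``indistinguishable from the outside'': any third vertex is equidistant to both, or strictly closer to one only through the other, so the two collapse in every rank ordering unless the set reaches inside the local structure.

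For part~(a), suppose $u$ has degree~$1$ with unique neighbour $v$, and let $B$ be a centroidal locating set with $u \notin B$. The key observation is that every shortest path from any vertex $w \neq u$ to $u$ must pass through $v$, so $d(w,u) = d(w,v) + 1$ for all $w \neq u$. I would compare $u$ and $v$: for any $b \in B$ (note $b \neq u$), we have $d(u,b) = d(v,b) + 1$ if $b \neq v$, and if $v \in B$ then $d(u,v) = 1$ while $d(v,v)=0$. The claim is that $r(u)$ and $r(v)$ induce the same ordered partition on $B$, since shifting every distance from $u$ by the constant $+1$ relative to $v$ (for detectors other than $v$ itself) preserves all the strict inequalities and ties among the $d(\cdot,b)$; the only detector that could separate them is $v$, but $v \notin B$ forces $r(u)=r(v)$, contradicting that $B$ is locating. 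Hence $u \in B$. Some care is needed to check the role of $v$ when $v \in B$ versus $v \notin B$, but in both subcases $u$ and $v$ remain unseparated by $B \setminus \{u\}$.

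For part~(b), where $u$ has degree~$1$, its neighbour $v$ has degree~$2$, and the second neighbour of $v$ is some vertex $z$, I would suppose $B$ contains neither $v$ nor any neighbour of $v$ other than $u$ (so in particular $z \notin B$), and show $r(u)=r(v)$. By part~(a) we may assume $u \in B$. For any detector $b \in B$ with $b \neq u$, every shortest path from $b$ to $u$ runs $b \dots z, v, u$, giving $d(b,u)=d(b,v)+1=d(b,z)+2$; meanwhile $d(u,u)=0$ and $d(u,v)=1$. The plan is again to argue that the distance profiles of $u$ and $v$ to $B$ differ by a uniform shift on all detectors except $u$ itself, and that the single detector $u$ places both at the front of their orderings in the same way, so no pair in $B$ separates $u$ from $v$; this contradicts Observation~\ref{obs:otherdef}. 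The hypothesis that $v$'s only other neighbour $z$ is not in $B$ is exactly what prevents a detector from ``seeing'' the branch at $v$ before reaching $u$.

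For part~(c), suppose $N(u)=N(v)$ (the false-twin case) or $N[u]=N[v]$ (the true-twin case), and that $B$ contains neither $u$ nor $v$. Then for every $b \in B$ we have $d(u,b)=d(v,b)$: any shortest path from $b$ to $u$ can be rerouted through a common neighbour to reach $v$ at the same cost, and symmetrically, using that $u,v$ share all their (open or closed) neighbours and $b \notin \{u,v\}$. Consequently $r(u)=r(v)$ identically on $B$, contradicting the locating property. I expect the main obstacle across all three parts to be the bookkeeping in parts~(a) and~(b): one must verify carefully that the constant additive shift in distances genuinely preserves the \emph{ordered partition} structure — all strict orderings and all ties among $\{d(\cdot,b)\}_{b \in B}$ — and handle the boundary detectors (namely $u$ itself, and $v$ when it happens to lie in $B$) as special cases, rather than treating the argument as a single uniform translation.
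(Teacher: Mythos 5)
Your proof is correct and takes essentially the same approach as the paper, whose entire proof is the assertion that otherwise the relevant pair ($u$ and its neighbour in (a), $u$ and $v$ in (b) and (c)) would not be distinguished. Your uniform-shift argument --- every detector's distance to $u$ exceeds its distance to $v$ by exactly $1$, so all strict inequalities and ties, and hence the ordered partitions $r(u)$ and $r(v)$, coincide --- is exactly the verification the paper leaves implicit.
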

\begin{proof}
(a): Otherwise, $u$ and its neighbour are not distinguished.

\noindent (b) and (c): Otherwise, $u$ and $v$ are not distinguished.
\end{proof}

\begin{lemma}\label{lemma:deg2}
Let $S$ be a set of vertices of a graph $G$ such that for each
$u\in S$, $|N(u)\setminus S|\geq 2$ and such that for each $u,v\in S$,
$N(u)\setminus S\neq N(v)\setminus S$. Then $V(G)\setminus S$ is a
centroidal locating set of $G$.
\end{lemma}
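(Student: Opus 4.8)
The plan is to set $B = V(G) \setminus S$ and prove directly from the definition that $r(x) \neq r(y)$ for every pair of distinct vertices $x, y \in V(G)$. The key observation that drives the whole argument is that I need not analyse the entire ordered partition $r(\cdot)$: it suffices to compare, for each vertex, only its \emph{first} block, namely the set of vertices of $B$ lying at minimum distance from it. I will show that this first block already reveals whether a vertex belongs to $B$ or to $S$, and that within each of these two classes it distinguishes all vertices.

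First I would record the two relevant first blocks. For a vertex $x \in B$, the detector $x$ itself lies in $B$ at distance $d(x,x)=0$, while every other detector is at distance at least $1$; hence the first block of $r(x)$ is the singleton $\{x\}$. For a vertex $s \in S$, the hypothesis $|N(s) \setminus S| \geq 2$ guarantees that $s$ has at least two neighbours in $B$, so the minimum distance from $s$ to $B$ equals $1$, attained exactly on $N(s) \cap B = N(s) \setminus S$; thus the first block of $r(s)$ is $N(s) \setminus S$, a set of size at least $2$.

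With these two facts in hand, distinguishing every pair is immediate. Two distinct vertices $x,y$ of $B$ have first blocks $\{x\}$ and $\{y\}$, which differ. A vertex of $B$ has a singleton first block, whereas a vertex of $S$ has a first block of size at least $2$, so the two can never coincide. Finally, for two distinct vertices $u,v \in S$, their first blocks are $N(u) \setminus S$ and $N(v) \setminus S$, which differ precisely by the second hypothesis. In every case $r(x) \neq r(y)$, so $B = V(G) \setminus S$ is a centroidal locating set.

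There is no genuinely hard step here; the entire argument collapses once one notices that the closest block of $r(\cdot)$ already carries enough information. The only points demanding care are verifying that the two structural hypotheses on $S$ are invoked exactly where needed --- the degree condition $|N(s) \setminus S| \geq 2$ to force the first block of an $S$-vertex to have size at least $2$ (thereby separating it from all of $B$), and the distinctness condition $N(u) \setminus S \neq N(v) \setminus S$ to separate pairs inside $S$ --- and observing that the distance-$0$ and distance-$1$ reasoning is purely local, so no connectivity assumption beyond what makes $r(\cdot)$ well defined is required.
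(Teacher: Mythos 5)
Your proof is correct and follows exactly the same approach as the paper's: the paper's (one-sentence) argument is precisely that every vertex of $B=V(G)\setminus S$ is first located by itself, while every vertex of $S$ is first located by a distinct set of at least two vertices of $B$. You have simply spelled out in full detail the same comparison of first blocks, invoking the two hypotheses at the same points the paper implicitly does.
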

\begin{proof}Let $B=V(G)\setminus S$. Every vertex of $B$ is first located by
itself, while every vertex of $S$ is first located by a distinct set
of at least two vertices of $B$.
\end{proof}

Note that in particular, Lemma~\ref{lemma:deg2} shows that for any vertex $u$ of
degree at least~2 in a graph $G$, $V(G)\setminus\{u\}$ is a centroidal
locating set of $G$.

\subsection{Bounds}\label{subsec:bounds}

We now provide some lower and upper bounds for the value of parameter
$\M$.

\begin{theorem}\label{thm:LB}
Let $G$ be a graph on $n$ vertices with maximum degree at
least~2. Then $$(1+o(1))\frac{\ln n}{\ln\ln n}\leq \M(G) \leq n-1.$$
\end{theorem}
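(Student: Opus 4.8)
The statement splits into the easy upper bound and the more delicate lower bound, and I would handle them separately.

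\medskip
\noindent\textbf{Upper bound.} This is essentially free from what precedes the theorem. Since $G$ has maximum degree at least~$2$, it contains some vertex $u$ with $\deg(u)\geq 2$. Applying Lemma~\ref{lemma:deg2} with $S=\{u\}$ (the condition $|N(u)\setminus S|\geq 2$ holds because $\deg(u)\geq 2$, and the distinctness condition on pairs in $S$ is vacuous for a singleton), we conclude that $V(G)\setminus\{u\}$ is a centroidal locating set. Hence $\M(G)\leq n-1$, which is exactly the remark recorded just after Lemma~\ref{lemma:deg2}. So I would simply cite that observation.

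\medskip
\noindent\textbf{Lower bound.} Here the plan is a counting argument followed by an asymptotic inversion. Let $B$ be a centroidal basis with $k=\M(G)=|B|$. By definition, the map $x\mapsto r(x)$ assigning to each vertex its rank ordering is injective, and each $r(x)$ is an \emph{ordered partition} of $B$. Since the number of ordered partitions of a $k$-element set is the $k$-th ordered Bell number $b(k)$, injectivity forces
\begin{equation*}
n \leq b(k).
\end{equation*}
The task then reduces to inverting this inequality asymptotically. Using the standard asymptotics $b(k)\sim \frac{k!}{2(\ln 2)^{k+1}}$ (Wilf~\cite{orderedBell}), the exponential correction factor contributes only $O(k)$ to the logarithm, so by Stirling's formula $\ln b(k) = k\ln k - k + O(k) = (1+o(1))\,k\ln k$. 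Combining with $\ln n \leq \ln b(k)$ gives $(1+o(1))\,k\ln k \geq \ln n$.

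\medskip
The final step, which I expect to be the main technical obstacle, is to solve $(1+o(1))\,k\ln k \geq \ln n$ for $k$ and extract the precise form $k\geq(1+o(1))\frac{\ln n}{\ln\ln n}$. The cleanest way I would carry this out is to verify that the threshold $k_0$ defined as the least integer with $b(k_0)\geq n$ satisfies $k_0=(1+o(1))\frac{\ln n}{\ln\ln n}$: setting $k=\frac{\ln n}{\ln\ln n}$ yields $\ln k = \ln\ln n - \ln\ln\ln n = (1+o(1))\ln\ln n$, whence $k\ln k = (1+o(1))\ln n$, confirming that this is indeed the correct order of the solution. Care is needed to track the $o(1)$ term honestly through the double logarithm, since a careless bound would lose the sharp constant~$1$; but no result beyond the ordered Bell asymptotics and Stirling's formula is required. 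Since $k=\M(G)\geq k_0$, the desired lower bound follows.
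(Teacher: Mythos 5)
Your proposal is correct and follows essentially the same route as the paper: the upper bound via the remark after Lemma~\ref{lemma:deg2}, and the lower bound by injectivity of $x\mapsto r(x)$ into ordered partitions, giving $n\leq b(k)$, then inverting via Wilf's asymptotics for $b(k)$, Stirling's formula, and the double-logarithm computation $\ln k=(1+o(1))\ln\ln n$. Your framing of the inversion through the threshold $k_0$ is, if anything, a slightly cleaner way to organize the same calculation the paper performs by assuming $n=b(k)$.
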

\begin{proof}
Lemma~\ref{lemma:deg2} immediately implies the upper bound. For the
lower bound, assume that $B$ is a centroidal basis of size $k=\M(G)$,
and $G$ has $n$ vertices. Then, to each vertex of $G$, one can assign
a distinct ordered partition of $B$. It is known that the number of
ordered partitions of a set of $k$ elements, the ordered Bell number
$b(k)$, is approximated by $$b(k)\sim\frac{k!}{2(\ln
  2)^{k+1}}+O(0.16^{k}k!),$$ see Wilf \cite[Section 5.2, Example
  1]{orderedBell}. It is clear that $n\leq b(k)$. Let us assume that
$B$ is a centroidal basis of $G$ of size $k$, with $n=b(k)$: for large
enough $n$, we have $n= k!(c^{k+1})$ for some constant $c$. Taking
the logarithm on both sides we get $\ln n=\ln (k!)+(k+1)\ln(c)$. By
using Stirling's approximation $\ln(k!)=k\ln(k)-k+O(\ln(k))$, we
obtain:
\begin{align}
\ln n=(1+o(1))k\ln(k).\label{eq:lnn}
\end{align}

Hence, $k=(1+o(1))\frac{\ln n}{\ln(k)}$; again taking the logarithm,
we get:
\begin{align}
\ln(k)=\ln\ln n-\ln\ln(k)+\ln(1+o(1))=(1+o(1))\ln\ln n.\label{eq:lnk}
\end{align}

Merging Equalities~\eqref{eq:lnn} and~\ref{eq:lnk}, we get:
$$k=(1+o(1))\frac{\ln n}{\ln\ln n}.$$

Since in $G$, there cannot be any smaller centroidal locating set than
$B$, the bound follows.
\end{proof}

The considerations of Theorem~\ref{thm:LB} can be strengthened if we
assume that the distances of a vertex to vertices in $B$ are bounded.
The following result was already known in the context of the metric
dimension, see e.g. Khuller, Raghavachari and Rosenfeld~\cite{KRR96}
or Chartrand, Eroh, Johnson and Oellermann~\cite{CEJO00}.\footnote{The
  result was stated in terms of the metric dimension but since any
  centroidal locating set is also a locating set the bound holds also
  for the centroidal dimension.}

\begin{proposition}\label{prop:diam}
Let $G$ be a graph with a centroidal locating set $B$ of size $k$ and
let $D$ be an integer. If for every vertex $u\in V(G)\setminus B$ and
for every vertex $b\in B$, $d(u,b)\leq D$, then $n\leq k+D^k$ and
hence $\M(G)\geq\log_D(n)-1$. In particular, this holds if $G$ has
diameter~$D$.
\end{proposition}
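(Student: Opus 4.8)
The plan is to reduce the statement to an elementary counting argument on distance vectors, exploiting the fact that under the bounded-distance hypothesis there are only finitely many possible such vectors. First I would record the key structural observation: for any vertex $x$, the rank ordering $r(x)$ is entirely determined by the distance vector $(d(x,w_1),\ldots,d(x,w_k))$, where $B=\{w_1,\ldots,w_k\}$. Indeed, $r(x)$ is obtained from this vector simply by sorting the coordinates in non-decreasing order and grouping ties. Consequently, if two distinct vertices shared the same distance vector they would also share the same rank ordering, contradicting the assumption that $B$ is a centroidal locating set. Hence the map $x\mapsto(d(x,w_1),\ldots,d(x,w_k))$ is injective on $V(G)$; in other words, every centroidal locating set is also a (metric) locating set, which is precisely the point made in the footnote.

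Next I would count the admissible distance vectors for vertices outside $B$. For $u\in V(G)\setminus B$ each coordinate $d(u,w_i)$ is a positive integer (as $u\neq w_i$) that is at most $D$ by hypothesis, so the distance vector lies in $\{1,\ldots,D\}^k$, a set of exactly $D^k$ elements. By the injectivity established above, $|V(G)\setminus B|\leq D^k$, and adding back the $k$ vertices of $B$ gives $n\leq k+D^k$, the first claimed inequality.

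Finally I would convert this into the logarithmic lower bound on $\M(G)$. Assuming $D\geq 2$ (the bound being vacuous otherwise, as $\log_D$ is then undefined or trivial), a short estimate shows $k+D^k\leq D^{k+1}$: it suffices that $k\leq D^k(D-1)$, which holds because $D^k(D-1)\geq 2^k\geq k$. Therefore $n\leq D^{k+1}$, and taking logarithms in base $D$ yields $k\geq\log_D(n)-1$. Applying this with $k=\M(G)$, the size of a centroidal basis, gives the stated bound. The in-particular clause is then immediate: if $G$ has diameter $D$, every pairwise distance, and in particular each $d(u,b)$, is at most $D$, so the hypothesis is automatically satisfied.

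I do not expect a genuine obstacle here; the only point requiring a little care is the observation that the rank ordering is a function of the distance vector, since this is exactly what lets a centroidal locating set inherit the resolving property and makes the elementary count on $\{1,\ldots,D\}^k$ legitimate. Everything else is a routine inequality manipulation.
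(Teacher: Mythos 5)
Your proposal is correct and takes essentially the same route as the paper: the paper also bounds $|V(G)\setminus B|$ by $D^k$ via the injectivity of the signature map, identifying the admissible rank orderings with words of length $k$ over an alphabet of size $D$ --- which is exactly your count of distance vectors in $\{1,\ldots,D\}^k$. The only cosmetic differences are that you route the injectivity through distance vectors (observing that a centroidal locating set is a resolving set, as in the paper's footnote) and that you spell out the elementary estimate $k+D^k\leq D^{k+1}$ behind the $\log_D(n)-1$ bound, which the paper leaves implicit.
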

\begin{proof}
Let $u\in V(G)\setminus B$ be a vertex. Since every vertex of $B$ is
at distance at most $D$ from $u$, $r(u)$ contains at most $D$
sets. There are exactly $D^k$ different ordered partitions of $B$ into
at most $D$ sets (this number is equal to the number of words of
length $k$ over an alphabet of size $D$), hence there can be at most
$D^k$ vertices in $V(G)\setminus B$, and hence $k+D^k$ vertices in
$G$.
\end{proof}

The bound of Proposition~\ref{prop:diam} was improved by Hernando,
Mora, Pelayo, Seara and Wood~\cite{HMMPSW10}:

\begin{theorem}[\cite{HMMPSW10}]\label{thm:diam}
Let $G$ be a graph on $n$ vertices with diameter $D\geq 2$ and
$\M(G)=k\geq 1$. Then $n\leq
\left(\left\lfloor\frac{2D}{3}\right\rfloor+1\right)^k+k\sum_{i=1}^{\lceil
  D/3\rceil}(2i-1)^{k-1}$.
\end{theorem}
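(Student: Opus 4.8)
Since this statement is quoted from~\cite{HMMPSW10}, where it is proved for the metric dimension, the plan is to reduce the centroidal case to the metric one rather than to rebuild the extremal counting argument from scratch. Concretely, I would first show that $\MD(G)\leq\M(G)$, and then argue that the right-hand side of the claimed inequality is monotone in the dimension parameter, so that substituting $\M(G)$ for $\MD(G)$ can only weaken the bound.

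The inequality $\MD(G)\leq\M(G)$ is immediate from the definitions. If $B$ is any centroidal locating set and $x,y$ are distinct vertices, then distinct rank orderings $r(x)\neq r(y)$ force distinct distance vectors $(d(x,b))_{b\in B}\neq(d(y,b))_{b\in B}$, since equal distance vectors would produce identical orderings, with identical ties. Hence every centroidal locating set is a locating set, and a centroidal basis in particular is a locating set, giving $\MD(G)\leq\M(G)=k$. This is exactly the principle already invoked in the footnote to Proposition~\ref{prop:diam}.

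Writing the right-hand side as $f(D,m)=\left(\lfloor 2D/3\rfloor+1\right)^{m}+m\sum_{i=1}^{\lceil D/3\rceil}(2i-1)^{m-1}$, I would then check that $f(D,m)$ is non-decreasing in $m$ for each fixed $D\geq 2$. The first term is non-decreasing because $\lfloor 2D/3\rfloor+1\geq 2$, and the summed term is non-decreasing term by term, since for every $i\geq 1$ one has $(m+1)(2i-1)^{m}\geq m(2i-1)^{m-1}$ (trivially when $2i-1=1$, and because $2i-1\geq 2$ otherwise). Applying the metric-dimension bound of~\cite{HMMPSW10} to $\MD(G)$ gives $n\leq f(D,\MD(G))$, and monotonicity together with $\MD(G)\leq k$ yields $n\leq f(D,\MD(G))\leq f(D,k)$, which is the desired inequality.

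The only genuinely delicate point on this route is the monotonicity verification, which is routine once the bound is split into its two pieces. The real difficulty is hidden inside~\cite{HMMPSW10} itself: a direct proof would require reconstructing their layer-by-layer analysis of distance vectors --- partitioning the vertices by their distance to a fixed element of the basis and bounding how many admissible vectors each layer can carry --- and I would only attempt that if the citation were unavailable.
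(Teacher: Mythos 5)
Your proposal is correct and follows essentially the same route as the paper: the paper gives no proof of Theorem~\ref{thm:diam} at all, quoting it directly from~\cite{HMMPSW10} and relying on exactly the reduction you describe --- every centroidal locating set is a locating set, so $\MD(G)\leq\M(G)$ (this is the first inequality of Theorem~\ref{thm:bounds-MD-LD}, and the same principle invoked in the footnote to Proposition~\ref{prop:diam}). Your explicit check that the right-hand side is non-decreasing in the dimension parameter is a detail the paper leaves implicit, and you carry it out correctly.
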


We improve the bound in Theorem~\ref{thm:diam} for $D\leq 3$:

\begin{theorem}\label{thm:diam3}
Let $G$ be a graph on $n$ vertices with diameter $D\in\{2,3\}$ and let
$\M(G)=k$. If $D=2$ and $k\geq 1$, $n\leq 2^{k}+k-1$. If $D=3$ and
$k\geq 5$, $n\leq 3^k-2^{k+1}+2$.
\end{theorem}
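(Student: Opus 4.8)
The plan is to bound the number of distinct rank orderings $r(u)$ that vertices $u \in V(G) \setminus B$ can realize when the diameter is small, improving the crude $D^k$ count of Proposition~\ref{prop:diam}. The key observation is that $r(u)$ is not an arbitrary ordered partition of $B$ into at most $D$ parts: it records, for each $b \in B$, the value $d(u,b)$, and these values are subject to the triangle inequality. The first block of $r(u)$ lists the vertices of $B$ closest to $u$; crucially, not every assignment of distances is geometrically feasible, and when $D$ is $2$ or $3$ the feasibility constraints are strong enough to prune the count substantially.

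For the case $D=2$, every distance $d(u,b)$ lies in $\{0,1,2\}$, so $r(u)$ partitions $B$ into at most three ordered blocks, labelled by distance $0$, $1$, $2$. A vertex $u \notin B$ has $d(u,b) \in \{1,2\}$ for all $b \in B$ (distance $0$ would force $u \in B$), so in fact $r(u)$ is determined by the single subset $B_1(u) = \{b \in B : d(u,b)=1\}$, i.e. which detectors are adjacent to $u$. There are $2^k$ such subsets. The refinement I would carry out is to rule out a few of these subsets as impossible or as forcing collisions: for instance, the empty set $B_1(u) = \emptyset$ would mean $u$ is at distance $2$ from every vertex of $B$, and at most one non-$B$ vertex can satisfy this without violating diameter-$2$ connectivity or creating twins. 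A careful accounting of how many subsets are simultaneously achievable and distinct, together with the $k$ vertices of $B$ themselves, should yield $n \le 2^k + k - 1$; the ``$k-1$'' correction is exactly the slack between the $k$ vertices of $B$ and the one forbidden or coincident pattern among the $2^k$ subsets.

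For $D=3$, the distances lie in $\{1,2,3\}$ for $u \notin B$, so $r(u)$ is an ordered partition of $B$ into at most three blocks, giving the naive count $3^k$. The work here is to subtract the ordered partitions that cannot occur. The main structural constraint I would exploit is the triangle inequality combined with diameter $3$: if two detectors $b, b'$ are at distance $1$ from $u$ then $d(b,b') \le 2$, and patterns that would force a pair of detectors to be too far apart become infeasible. I expect the count of excluded patterns to be governed by which detectors can simultaneously sit in the ``distance $3$'' block, and a double-counting or inclusion–exclusion argument should produce the $2^{k+1} - 2$ reduction, valid once $k \ge 5$ so that the lower-order terms behave. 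The hardness threshold $k \ge 5$ signals that for small $k$ the subtracted terms overshoot, which is why the bound is stated conditionally.

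The main obstacle will be the $D=3$ case: unlike $D=2$, where $r(u)$ collapses to a single neighbourhood subset, here one must reason about genuinely ordered partitions into three blocks and verify that the triangle inequality forbids precisely the claimed number of them — neither over- nor under-counting. I would handle this by fixing the set $A = \{b : d(u,b)=3\}$ of ``far'' detectors and arguing about the feasible configurations of the remaining detectors relative to $A$, then summing over the choices of $A$; the delicate part is ensuring that distinct vertices $u$ genuinely produce distinct $r(u)$ (so that the counting upper-bounds $n$) while simultaneously the infeasible patterns are correctly excluded. Throughout, I would lean on Lemma~\ref{lemma:twins} to handle the boundary cases where two vertices share identical distance profiles to $B$.
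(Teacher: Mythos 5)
Your $D=2$ argument is essentially the paper's: for $u\notin B$ the vector $r(u)$ is determined by $N(u)\cap B$, and the two patterns $N(u)\cap B=\emptyset$ and $N(u)\cap B=B$ produce the identical one-block vector $(B)$, so at most $2^k-1$ vertices lie outside $B$, giving $n\le 2^k+k-1$. (Note that the correct mechanism is exactly this collision of two patterns, not ``diameter-2 connectivity or creating twins.'')

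The $D=3$ plan, however, has a genuine gap, in two respects. First, you misidentify the source of the term $-2^{k+1}+2$: it has nothing to do with the triangle inequality or geometric feasibility. Since $r(u)$ records only the \emph{relative order} of the distances, a vertex $u\notin B$ realizes an ordered partition of $B$ into at most three nonempty blocks, and the number of such ordered partitions is exactly $1+(2^k-2)+(3^k-3\cdot 2^k+3)=3^k-2^{k+1}+2$. The drop from $3^k$ is purely the collapse of distance functions to ordered partitions (e.g., ``all of $B$ at distance 1'', ``all at distance 2'' and ``all at distance 3'' are the same vector $(B)$); no infeasibility argument is needed, and an exclusion count over triangle-inequality-violating distance functions is not what produces this number. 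Second --- and this is the real difficulty your plan never confronts --- this count only bounds $|V(G)\setminus B|$, so it yields $n\le k+3^k-2^{k+1}+2$, which is weaker than the claim by exactly $k$. The paper absorbs this $+k$ by a dichotomy on $B$ itself: if every two vertices of $B$ are at distance at most~2, then each $b\in B$ also has at most three blocks in $r(b)$ (distances $0,1,2$), so \emph{all} $n$ vertices realize distinct at-most-3-block partitions and the bound follows; otherwise some pair $b,b'\in B$ has $d(b,b')=3$, and then no vertex whatsoever can realize a 3-block partition whose first block is exactly $\{b,b'\}$ --- for $v\notin B$, three blocks force the distance values to be exactly $1,2,3$, so $d(v,b)=d(v,b')=1$ and hence $d(b,b')\le 2$, a contradiction, while for $v\in B$ the first block is the singleton $\{v\}$. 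Taking the second block $T$ to be a singleton or a pair gives at least $k-2+\binom{k-2}{2}\ge k$ forbidden partitions once $k\ge 5$, which exactly compensates for the $k$ vertices of $B$. This is the true role of the hypothesis $k\ge 5$, which your proposal leaves unexplained. Ironically, your triangle-inequality observation (two detectors at distance~1 from $u$ must be within distance~2 of each other) is precisely the tool needed --- but aimed at the partitions with first block of size two in the second case of the dichotomy, not at the $2^{k+1}-2$ term.
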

\begin{proof}
Let $B$ be a centroidal locating set of $G$.

\emph{$\bullet$ $D=2$ and $k\geq 1$.} By
Theorem~\ref{thm:diam} we have $n\leq 2^{k}+k$. However, observe that
for every vertex $v\notin B$, $r(v)=(N(v)\cap B, B\setminus
N(v))$. But there can only be $2^{k}$ distinct sets $N(v)\cap B$, and
moreover if $N(v)=B$ and $N(w)=\emptyset$ we have $r(v)=r(w)=(B)$. Hence
$|V(G)\setminus B|\leq 2^{k}-1$ and we are done.

\emph{$\bullet$ $D=3$ and $k\geq 5$.} Since the
diameter is~3, for every vertex $v$, $r(v)$ has at most three
components, unless $v\in B$, then it may have four. Moreover, if
$v\notin B$, then $r(v)$ either has one component (then $r(v)=(B)$),
or two (then $r(v)=(S,B\setminus S)$ with $1\leq |S|\leq |B|-1$), or
three (then $r(v)=(S,T,B\setminus (S\cup T))$ with $S\cap T=\emptyset$
and $1\leq |T|\leq |B\setminus S|$).

Therefore, we have
$$|V(G)\setminus B|\leq 1+\sum_{i=1}^{k-1}\left({k\choose i}(2^{k-i}-1)\right)=3^k-2^{k+1}+2,$$
that is, $1$ plus the number of ways of choosing a nonempty subset $S$ of $B$ and a nonempty subset of $B\setminus S$.

Now, if for every pair $b,b'$ in $B$ we have $d(b,b')\leq 2$, then no
vertex $v$ has four components in $r(v)$ and so the claimed bound
holds. Now, assume that there is a pair $b,b'$ in $B$ with
$d(b,b')=3$. This implies that for any vertex $v$, $r(v)$ cannot be of
the form $(\{b,b'\},T,B\setminus(\{b,b'\}\cup T))$ with $S$ a nonempty
  proper subset of $T\setminus\{b,b'\}$ since otherwise we must have
  $d(v,b)=d(v,b')=1$ and hence $d(b,b')\leq 2$, a contradiction. This
  gives us at least $k-2+{k-2\choose 2}$ forbidden triples of the
  form $(\{b,b'\},T,B\setminus(\{b,b'\}\cup T))$ since $T$ can be
    chosen to be one of $k-2$ possible singletons and ${k-2\choose 
      2}$ possible pairs. Since $k\geq 5$, $k-2+{k-2\choose 2}\geq k$
    and we are done.
\end{proof}

The bounds of Theorem~\ref{thm:diam3} will be proved tight in
Section~\ref{sec:tightness}.

We will now relate parameter $\M$ with parameters $\MD$ and $\LD$.

\begin{lemma}\label{lemm:LDset}
Let $G$ be a graph with a locating-dominating set $C$ such that $\ell$
vertices from $V(G)\setminus C$ have a unique neighbour in $C$. Then
$\M(G)\leq \LD(G)+\ell$.
\end{lemma}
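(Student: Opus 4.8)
The plan is to exhibit an explicit centroidal locating set of the right size, built directly from the given locating-dominating set. Let $C$ be a locating-dominating set attaining $\LD(G)$, and let $A$ denote the set of the $\ell$ vertices of $V(G)\setminus C$ that have a unique neighbour in $C$. I would set $B=C\cup A$, so that $|B|\leq\LD(G)+\ell$, and then argue that $B$ is a centroidal locating set; this immediately yields $\M(G)\leq|B|\leq\LD(G)+\ell$.

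The whole argument rests on a single observation about the first component of the rank orderings. Since every vertex of $B$ is at distance $0$ from itself and at distance at least $1$ from every other vertex of $B$, the first component of $r(x)$ is the singleton $\{x\}$ whenever $x\in B$, whereas for $x\notin B$ the first component of $r(x)$ is exactly $N(x)\cap B$. The key quantitative input is that every vertex $y\in V(G)\setminus B$ satisfies $|N(y)\cap C|\geq 2$: indeed $y\notin C$ forces $N(y)\cap C\neq\emptyset$ by domination, while $y\notin A$ rules out $|N(y)\cap C|=1$. Hence the first component of $r(y)$ has size at least $2$ for every $y\notin B$. This is precisely the role played by adjoining $A$: it removes exactly those outside vertices whose first component would otherwise be a singleton and could clash with a basis vertex.

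I would then verify the defining condition $r(x)\neq r(y)$ by splitting into three cases according to how the two distinct vertices $x,y$ sit with respect to $B$. For $x,y\in B$, the first components $\{x\}$ and $\{y\}$ differ. For $x\in B$ and $y\notin B$, the first components differ already in size ($1$ versus at least $2$). For $x,y\notin B$, both lie in $V(G)\setminus C$, so the locating-dominating property gives $N(x)\cap C\neq N(y)\cap C$; intersecting the first components $N(x)\cap B$ and $N(y)\cap B$ with $C$ recovers these two distinct sets (as $C\subseteq B$), whence the first components themselves differ. In every case the rank orderings already disagree on their first component, confirming via Observation~\ref{obs:otherdef} that $B$ is a centroidal locating set.

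The conceptual obstacle worth flagging is that the locating-dominating hypothesis is about \emph{absolute} distance-$1$ information (the sets $N(\cdot)\cap C$), while the centroidal dimension only sees \emph{relative} orderings of distances, so a priori this information need not transfer. The resolution is that all of the distinguishing work can be pushed to the first component of $r(\cdot)$, where the relative ordering and the absolute distance-$1$ data coincide; the only genuine difficulty is then preventing a non-basis vertex from producing a singleton first component identical to that of a basis vertex, which is exactly what the $\ell$ added vertices eliminate. I would also check the degenerate situation $V(G)\setminus B=\emptyset$ (where $B=V(G)$ is trivially a centroidal locating set) and confirm that neighbours of $x$ lying in $A$ cause no trouble, since they only enlarge $N(x)\cap B$ without altering its intersection with $C$.
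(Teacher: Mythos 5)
Your proof is correct and follows essentially the same route as the paper's: augment $C$ by the $\ell$ vertices with a unique neighbour in $C$, and distinguish all vertices via the first component of $r(\cdot)$ (basis vertices by themselves, outside vertices by their neighbourhood in $C$, with the size-$1$ versus size-$\geq 2$ observation handling the mixed case). The only difference is presentational — you add the set $A$ all at once with an explicit three-case check, whereas the paper adds the problematic vertices one by one — so this is the same argument, if anything spelled out more carefully.
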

\begin{proof}
We construct a centroidal locating set $C'$ from $C$ by adding at most
$\ell$ vertices to $C$. Note that in the setting of a centroidal
locating set and considering $C$ as a potential solution, each vertex
of $C$ is located first by itself, and then by its neighbourhood
within $C$, while each vertex $v$ of $V(G)\setminus C$ is first
located by $N(v)\cap C$. Hence, any two vertices both in $C$ or both
in $V(G)\setminus C$ are distinguished. However, if for some vertex
$v$ of $V(G)\setminus C$, $N(v)\cap C=\{c_v\}$, $v$ and $c_v$ might
not be distinguished. In that case, it is enough to add $v$ to $C$ to
solve this problem. This does not cause any other conflict since any
superset of a centroidal locating set is also a centroidal locating
set. Repeating the process $\ell$ times completes the proof (observing
that any other vertex of $C$ is distinguished from all other
vertices).
\end{proof}

Using Lemma~\ref{lemm:LDset}, we obtain the following theorem:

\begin{theorem}\label{thm:bounds-MD-LD}
For any graph $G$, $\MD(G)\leq\M(G)\leq 2\LD(G)$.
\end{theorem}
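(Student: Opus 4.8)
The plan is to establish the two inequalities separately, the left one being essentially immediate and the right one following from Lemma~\ref{lemm:LDset} together with a counting argument.

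For the lower bound $\MD(G)\leq\M(G)$, I would argue that every centroidal locating set is also a (metric) locating set. Indeed, for any vertex $x$ and any set $B=\{w_1,\dots,w_k\}$, the distance vector $(d(x,w_1),\dots,d(x,w_k))$ completely determines the rank ordering $r(x)$, since $r(x)$ is merely the ordered partition of $B$ induced by sorting these distances. Hence if two distinct vertices $x,y$ have identical distance vectors with respect to $B$, they also satisfy $r(x)=r(y)$. Contrapositively, if $B$ distinguishes all pairs of vertices in the centroidal sense, it distinguishes them in the metric sense as well. Therefore any centroidal basis is a locating set, giving $\MD(G)\leq\M(G)$.

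For the upper bound $\M(G)\leq 2\LD(G)$, I would start from a locating-dominating set $C$ of minimum size, so that $|C|=\LD(G)$, and apply Lemma~\ref{lemm:LDset}, which yields $\M(G)\leq\LD(G)+\ell$, where $\ell$ denotes the number of vertices of $V(G)\setminus C$ having a unique neighbour in $C$. It then suffices to show that $\ell\leq\LD(G)$. The key observation is that each such vertex $v$ satisfies $N(v)\cap C=\{c_v\}$ for a single vertex $c_v\in C$, i.e. it is dominated by a \emph{singleton} subset of $C$. Because $C$ is locating-dominating, every vertex of $V(G)\setminus C$ is dominated by a distinct subset of $C$; in particular the singletons $\{c_v\}$ associated to the $\ell$ vertices counted by $\ell$ are pairwise distinct. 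Since there are only $|C|$ singleton subsets of $C$, we obtain $\ell\leq|C|=\LD(G)$, and combining with Lemma~\ref{lemm:LDset} gives $\M(G)\leq\LD(G)+\ell\leq 2\LD(G)$.

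I do not expect a serious obstacle here: the left inequality is a one-line structural remark, and the right inequality reduces cleanly to Lemma~\ref{lemm:LDset}. The only point requiring care is the counting step bounding $\ell$, where one must invoke the defining property of a locating-dominating set to guarantee that the singleton neighbourhoods are distinct; this is exactly where the locating (rather than merely dominating) nature of $C$ is used.
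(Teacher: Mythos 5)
Your proposal is correct and follows essentially the same route as the paper: the first inequality via the observation that equal distance vectors force equal rank orderings (so any centroidal locating set is a locating set), and the second via Lemma~\ref{lemm:LDset} combined with the fact that the singleton neighbourhoods $\{c_v\}$ must be pairwise distinct because $C$ is locating-dominating, giving $\ell\leq|C|=\LD(G)$. No gaps; your counting step is exactly the paper's contradiction argument phrased positively.
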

\begin{proof}
For the first inequality, note that any centroidal locating set $B$ is
a locating set. Indeed, if two vertices were at the same distance to
each vertex of $B$, then they would not be distinguished by their
relative distances by $B$, a contradiction.

The second inequality is proved by Lemma~\ref{lemm:LDset} by observing
that for any locating-dominating set $C$, $\ell\leq |C|$: for each
$c_v\in C$, if there were two vertices of $V(G)\setminus C$ having
only $c_v$ as a neighbour in $C$, they would not be distinguished by
$C$, a contradiction.
\end{proof}

For graphs of diameter~2, one gets the following improvement:

\begin{theorem}\label{thm:bound-diam2}
Let $G$ be a graph of diameter~2. Then $\LD(G)-1\leq\MD(G)\leq\M(G)\leq 2\LD(G)$.
\end{theorem}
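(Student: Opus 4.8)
The two rightmost inequalities $\MD(G)\leq\M(G)\leq 2\LD(G)$ hold for every graph by Theorem~\ref{thm:bounds-MD-LD}, so the only new content is the bound $\LD(G)\leq\MD(G)+1$, which I would establish directly from the diameter-2 assumption. The plan is to start from a metric basis $W$ of $G$ (so $|W|=\MD(G)$) and convert it into a locating-dominating set by adding at most one vertex. The crucial observation is that in a graph of diameter~$2$ every distance $d(v,w)$ with $v\neq w$ equals either $1$ or $2$; consequently, for a vertex $v\notin W$ the distance vector $(d(v,w))_{w\in W}$ is completely determined by the trace $N(v)\cap W$ (the coordinates equal to $1$ are exactly the neighbours of $v$ in $W$, and the remaining coordinates equal $2$). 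Hence two vertices of $V(G)\setminus W$ have the same distance vector if and only if they have the same trace on $W$.

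First I would record the locating property. Since $W$ resolves $G$, distinct vertices have distinct distance vectors; by the observation above, distinct vertices of $V(G)\setminus W$ therefore have distinct traces $N(v)\cap W$. This is precisely the separation condition required of a locating-dominating set among the vertices outside the set.

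Next I would handle domination. A vertex $v\notin W$ fails to be dominated by $W$ exactly when $N(v)\cap W=\emptyset$, i.e.\ when its distance vector is $(2,\dots,2)$. Because $W$ is resolving, at most one vertex can realise this vector, so at most one vertex $v_0$ of $G$ is left undominated by $W$. If no such vertex exists, then $W$ is already dominating and, by the previous paragraph, locating-dominating, giving $\LD(G)\leq|W|=\MD(G)$. Otherwise I would set $D=W\cup\{v_0\}$; then $D$ is dominating (every other vertex outside $W$ is already dominated by $W$, and $v_0\in D$), and $|D|=\MD(G)+1$.

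The step I expect to require the most care is verifying that adjoining $v_0$ preserves the locating property of $D$. Suppose $u,v\in V(G)\setminus D$ satisfy $N(u)\cap D=N(v)\cap D$; restricting to $W$ gives $N(u)\cap W=N(v)\cap W$, and since $u,v\notin W$ this forces equal distance vectors on $W$, hence $u=v$ as $W$ resolves $G$. Moreover every such vertex has nonempty trace, being dominated by $W$. Thus $D$ is a locating-dominating set of size $\MD(G)+1$, yielding $\LD(G)\leq\MD(G)+1$ and completing the displayed chain of inequalities.
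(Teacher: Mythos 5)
Your proposal is correct and follows essentially the same route as the paper: both exploit that in a diameter-$2$ graph the distance vector of a vertex outside a resolving set is determined by its neighbourhood trace, so distinct outside vertices have distinct traces, at most one vertex is undominated, and adding that single vertex yields a locating-dominating set of size at most $\MD(G)+1$. Your extra verification that adjoining $v_0$ preserves the locating property is a welcome bit of care, but it is the same argument the paper leaves implicit.
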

\begin{proof}
The last two inequalities come from Theorem~\ref{thm:bounds-MD-LD}.
For the first inequality, we show that any locating set $L$ is almost
a locating-dominating set. Each vertex $v$ of $V(G)\setminus L$ has
distance~1 to all elements of its neighbourhood $N_v=N(v)\cap L$ in
$L$, and distance~2 to all vertices of $L\setminus N_v$. In other
words, vertices in $L$ only distinguish vertices they are adjacent to,
from non-adjacent ones. Since $L$ is a locating set, it follows that
each vertex in $V(G)\setminus L$ has a distinct neighbourhood within
$L$. Therefore, if $L$ is dominating, it is also
locating-dominating. Otherwise, there is at most one vertex that is
not dominated; adding it to set $L$, we get a locating-dominating set
of size $|L|+1$.
\end{proof}

\section{Tightness of the bounds}\label{sec:tightness}

In this section, we discuss the tightness of some of the bounds from
Subsection~\ref{subsec:bounds}.

\subsection{Graphs with small centroidal dimension}

For $k=1,2$ it is easy to construct graphs $G$ on $n$ vertices with
$\M(G)=k$ and $n=b(k)$: for $k=1$, $b(1)=1$ and $K_1$ is the only
answer; for $k=2$, $b(1)=3$ and both $P_3,K_3$ are answers.

For $k=3$, $b(3)=13$; the two graphs of Figure~\ref{fig:G13} have 13
vertices and centroidal dimension $3$ (the black vertices form a
centroidal basis). 

It holds that $b(4)=75$~\cite{sloane}; a more intricate construction
for this case is presented in Figure~\ref{fig:G75}.

 We do not know such optimal examples for $k\geq 5$ (recall that
 $b(k)$ grows very rapidly with $k$: $b(5)=541$,
 $b(6)=4683$~\cite{sloane}). Note that it is not possible to directly
 extend our example for $k=4$ to higher values by using the same idea;
 indeed, every two vertices from the centroidal basis $B$ are at
 distance at most~3 from each other. But in our construction, the
 vertices whose vector is a permutation of $B$ have a neighbour in the
 basis. Hence their vector $r$ can have length at most~4, but these
 $k!$ vertices need to have a vector of length $k$.

\begin{figure}[!htpb]
\centering
\subfigure[]{\scalebox{1.0}{\begin{tikzpicture}[join=bevel,inner sep=0.5mm,scale=0.8,line width=0.5pt]
\path (0:2cm) node[draw,shape=circle] (x0) {};
\path (1*30:2cm) node[draw,shape=circle,fill=black] (x1) {};
\path (2*30:2cm) node[draw,shape=circle] (x2) {};
\path (3*30:2cm) node[draw,shape=circle] (x3) {};
\path (4*30:2cm) node[draw,shape=circle] (x4) {};
\path (5*30:2cm) node[draw,shape=circle,fill=black] (x5) {};
\path (6*30:2cm) node[draw,shape=circle] (x6) {};
\path (7*30:2cm) node[draw,shape=circle] (x7) {};
\path (8*30:2cm) node[draw,shape=circle] (x8) {};
\path (9*30:2cm) node[draw,shape=circle,fill=black] (x9) {};
\path (10*30:2cm) node[draw,shape=circle] (x10) {};
\path (11*30:2cm) node[draw,shape=circle] (x11) {};

\path (0,0) node [draw,shape=circle] (c) {};
\draw[thick] (c) -- (x3)
             (c) -- (x7)
             (c) -- (x11)
             (x0) -- (x1) -- (x2) -- (x3)  -- (x4) -- (x5)  -- (x6) -- (x7)  -- (x8) -- (x9) -- (x10) -- (x11) -- (x0);
\end{tikzpicture}}}\qquad
\subfigure[]{\scalebox{1.0}{\begin{tikzpicture}[join=bevel,inner sep=0.5mm,scale=0.8,line width=0.5pt]
\path (0,0) node [draw,shape=circle] (c) {};
\path (-30:1.2cm) node[draw,shape=circle] (x0) {};
\path (30:1.2cm) node[draw,shape=circle,fill=black] (x1) {};
\path (90:1.2cm) node[draw,shape=circle] (x2) {};
\path (150:1.2cm) node[draw,shape=circle,fill=black] (x3) {};
\path (210:1.2cm) node[draw,shape=circle] (x4) {};
\path (270:1.2cm) node[draw,shape=circle,fill=black] (x5) {};
\path (x3)+(0,1.5) node[draw,shape=circle] (hg) {};
\path (x1)+(0,1.5) node[draw,shape=circle] (hd) {};
\path (x3)+(-1.3,-0.75) node[draw,shape=circle] (gh) {};
\path (x5)+(-1.3,-0.75) node[draw,shape=circle] (gb) {};
\path (x1)+(1.3,-0.75) node[draw,shape=circle] (dh) {};
\path (x5)+(1.3,-0.75) node[draw,shape=circle] (db) {};

\draw[thick] (c) -- (x3) -- (x2) -- (x1) -- (hd) -- (hg) --
             (x3) -- (gh) -- (gb) -- (x5) -- (db) -- (dh) --
             (x1) -- (c) -- (x5) -- (x0) -- (x1)
             (x5) -- (x4) -- (x3);

\end{tikzpicture}}}
\caption{Two graphs on $b(3)=13$ vertices with centroidal dimension $3$.}\label{fig:G13}
\end{figure}
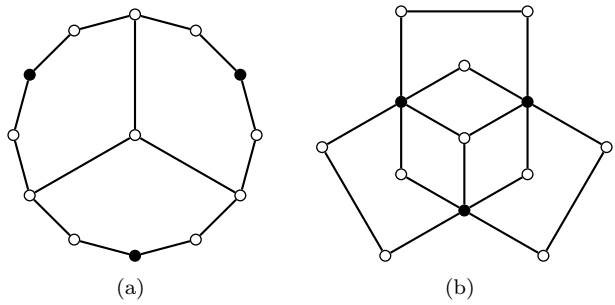

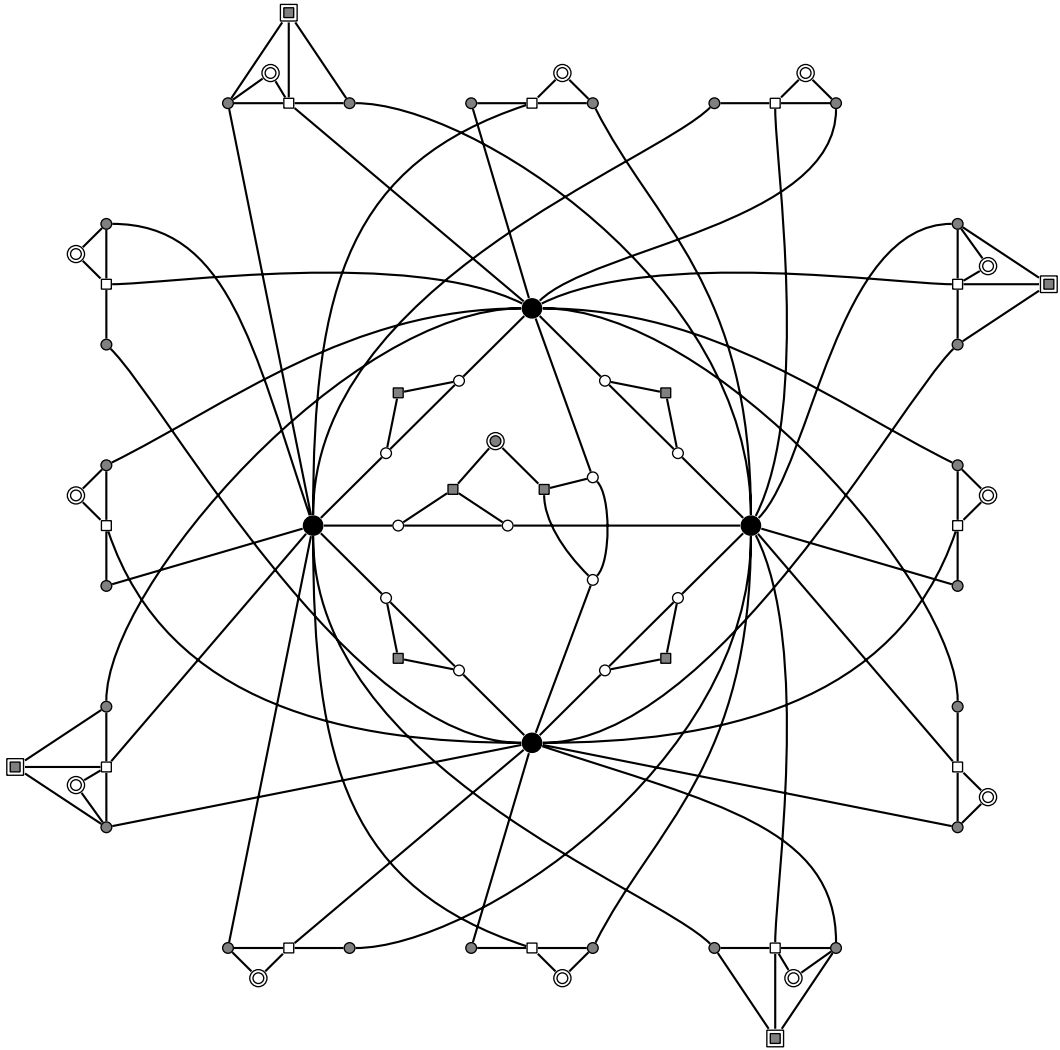
\begin{figure}[!htpb]
\centering
\scalebox{1.0}{\begin{tikzpicture}[join=bevel,inner sep=0.5mm,scale=0.8,line width=0.5pt]
\path (0,-3.6) node [draw,shape=circle,fill=black, scale=1.8] (c3) {};
\path (3.6,0) node [draw,shape=circle,fill=black, scale=1.8] (c2) {};
\path (0,3.6) node [draw,shape=circle,fill=black, scale=1.8] (c1) {};
\path (-3.6,0) node [draw,shape=circle,fill=black, scale=1.8] (c4) {};

\path (c4)+(1.2,1.2) node[draw,shape=circle] (a41) {};
\path (c4)+(2.4,2.4) node[draw,shape=circle] (a14) {};
\path (c2)+(-1.2,1.2) node[draw,shape=circle] (a21) {};
\path (c2)+(-2.4,2.4) node[draw,shape=circle] (a12) {};
\path (c4)+(1.2,-1.2) node[draw,shape=circle] (a43) {};
\path (c4)+(2.4,-2.4) node[draw,shape=circle] (a34) {};
\path (c2)+(-1.2,-1.2) node[draw,shape=circle] (a23) {};
\path (c2)+(-2.4,-2.4) node[draw,shape=circle] (a32) {};
\path (c3)+(1,2.7) node[draw,shape=circle] (a31) {};
\path (c3)+(1,4.4) node[draw,shape=circle] (a13) {};
\path (c4)+(1.4,0) node[draw,shape=circle] (a42) {};
\path (c4)+(3.2,0) node[draw,shape=circle] (a24) {};

\path (c4)+(1.4,2.2) node[draw,shape=rectangle, scale=1.3,fill=gray] (b41) {};
\path (c2)+(-1.4,2.2) node[draw,shape=rectangle, scale=1.3,fill=gray] (b21) {};
\path (c4)+(1.4,-2.2) node[draw,shape=rectangle, scale=1.3,fill=gray] (b43) {};
\path (c2)+(-1.4,-2.2) node[draw,shape=rectangle, scale=1.3,fill=gray] (b23) {};
\path (a13)+(-0.8,-0.2) node[draw,shape=rectangle, scale=1.3,fill=gray] (b31) {};
\path (a42)+(0.9,0.6) node[draw,shape=rectangle, scale=1.3,fill=gray] (b42) {};

\path (c4)+(3,1.4) node[draw,shape=circle, scale=1.6] (all) {};
\path (all) node[draw,shape=circle,fill=gray] {};

\path (-5,7) node[draw,shape=circle,fill=gray] (d412) {};
\path (d412)+(1,0) node[draw,shape=rectangle, scale=1.3] (e412) {};
\path (e412)+(1,0) node[draw,shape=circle,fill=gray] (d214) {};
\path (d412)+(0.7,0.5) node[draw,shape=circle, scale=1.6] (f412) {};
\path (f412) node[draw,shape=circle] {};
\path (d412)+(1,1.5) node[draw,shape=rectangle, scale=2.2] (g412) {};
\path (g412) node[draw,shape=rectangle, scale=1.3,fill=gray] {};

\path (-1,7) node[draw,shape=circle,fill=gray] (d142) {};
\path (d142)+(1,0) node[draw,shape=rectangle, scale=1.3] (e142) {};
\path (e142)+(1,0) node[draw,shape=circle,fill=gray] (d241) {};
\path (e142)+(0.5,0.5) node[draw,shape=circle, scale=1.6] (f142) {};
\path (f142) node[draw,shape=circle] {};

\path (3,7) node[draw,shape=circle,fill=gray] (d421) {};
\path (d421)+(1,0) node[draw,shape=rectangle, scale=1.3] (e421) {};
\path (e421)+(1,0) node[draw,shape=circle,fill=gray] (d124) {};
\path (e421)+(0.5,0.5) node[draw,shape=circle, scale=1.6] (f421) {};
\path (f421) node[draw,shape=circle] {};

\path (-5,-7) node[draw,shape=circle,fill=gray] (d432) {};
\path (d432)+(1,0) node[draw,shape=rectangle, scale=1.3] (e432) {};
\path (e432)+(1,0) node[draw,shape=circle,fill=gray] (d234) {};
\path (d432)+(0.5,-0.5) node[draw,shape=circle, scale=1.6] (f432) {};
\path (f432) node[draw,shape=circle] {};

\path (-1,-7) node[draw,shape=circle,fill=gray] (d342) {};
\path (d342)+(1,0) node[draw,shape=rectangle, scale=1.3] (e342) {};
\path (e342)+(1,0) node[draw,shape=circle,fill=gray] (d243) {};
\path (e342)+(0.5,-0.5) node[draw,shape=circle, scale=1.6] (f342) {};
\path (f342) node[draw,shape=circle] {};

\path (3,-7) node[draw,shape=circle,fill=gray] (d423) {};
\path (d423)+(1,0) node[draw,shape=rectangle, scale=1.3] (e423) {};
\path (e423)+(1,0) node[draw,shape=circle,fill=gray] (d324) {};
\path (e423)+(0.3,-0.5) node[draw,shape=circle, scale=1.6] (f423) {};
\path (f423) node[draw,shape=circle] {};
\path (d423)+(1,-1.5) node[draw,shape=rectangle, scale=2.2] (g432) {};
\path (g432) node[draw,shape=rectangle, scale=1.3,fill=gray] {};

\path (-7,-5) node[draw,shape=circle,fill=gray] (d341) {};
\path (d341)+(0,1) node[draw,shape=rectangle, scale=1.3] (e341) {};
\path (e341)+(0,1) node[draw,shape=circle,fill=gray] (d143) {};
\path (d341)+(-0.5,0.7) node[draw,shape=circle, scale=1.6] (f341) {};
\path (f341) node[draw,shape=circle] {};
\path (d341)+(-1.5,1) node[draw,shape=rectangle, scale=2.2] (g341) {};
\path (g341) node[draw,shape=rectangle, scale=1.3,fill=gray] {};

\path (-7,-1) node[draw,shape=circle,fill=gray] (d431) {};
\path (d431)+(0,1) node[draw,shape=rectangle, scale=1.3] (e431) {};
\path (e431)+(0,1) node[draw,shape=circle,fill=gray] (d134) {};
\path (e431)+(-0.5,0.5) node[draw,shape=circle, scale=1.6] (f431) {};
\path (f431) node[draw,shape=circle] {};

\path (-7,3) node[draw,shape=circle,fill=gray] (d314) {};
\path (d314)+(0,1) node[draw,shape=rectangle, scale=1.3] (e314) {};
\path (e314)+(0,1) node[draw,shape=circle,fill=gray] (d413) {};
\path (e314)+(-0.5,0.5) node[draw,shape=circle, scale=1.6] (f314) {};
\path (f314) node[draw,shape=circle] {};

\path (7,-5) node[draw,shape=circle,fill=gray] (d321) {};
\path (d321)+(0,1) node[draw,shape=rectangle, scale=1.3] (e321) {};
\path (e321)+(0,1) node[draw,shape=circle,fill=gray] (d123) {};
\path (d321)+(0.5,0.5) node[draw,shape=circle, scale=1.6] (f321) {};
\path (f321) node[draw,shape=circle] {};

\path (7,-1) node[draw,shape=circle,fill=gray] (d231) {};
\path (d231)+(0,1) node[draw,shape=rectangle, scale=1.3] (e231) {};
\path (e231)+(0,1) node[draw,shape=circle,fill=gray] (d132) {};
\path (e231)+(0.5,0.5) node[draw,shape=circle, scale=1.6] (f231) {};
\path (f231) node[draw,shape=circle] {};

\path (7,3) node[draw,shape=circle,fill=gray] (d312) {};
\path (d312)+(0,1) node[draw,shape=rectangle, scale=1.3] (e312) {};
\path (e312)+(0,1) node[draw,shape=circle,fill=gray] (d213) {};
\path (e312)+(0.5,0.3) node[draw,shape=circle, scale=1.6] (f312) {};
\path (f312) node[draw,shape=circle] {};
\path (d312)+(1.5,1) node[draw,shape=rectangle, scale=2.2] (g321) {};
\path (g321) node[draw,shape=rectangle, scale=1.3,fill=gray] {};

\draw[thick] (c4) -- (a41) -- (a14) -- (c1) --
             (a12) -- (a21) -- (c2) --
             (a23) -- (a32) -- (c3) --
             (a34) -- (a43) -- (c4) --
             (a42) -- (a24) -- (c2)
             (c1) -- (a13) .. controls +(0.3,-0.3) and +(0.3,0.3) .. (a31) -- (c3)
             (a41) -- (b41) -- (a14)
             (a21) -- (b21) -- (a12)
             (a43) -- (b43) -- (a34)
             (a23) -- (b23) -- (a32)
             (a13) -- (b31) .. controls +(0,-0.5) and +(-0.5,0.5) .. (a31)
             (a42) -- (b42) -- (a24)
             (b31) -- (all) -- (b42)
             (g412) -- (e412) -- (f412) -- (d412) -- (e412) -- (d214)
             (d412) -- (g412) -- (d214)
             (d412) -- (c4) (e412) -- (c1) (d214) .. controls +(2,0) and +(0,4) .. (c2)
             (e142) -- (f142) -- (d241) (d142) -- (e142) -- (d241)
             (d142) -- (c1) (e142) .. controls +(-3,-1) and +(0,4) .. (c4) (d241) .. controls +(1,-2) and +(0,4) .. (c2)
             (e421) -- (f421) -- (d124) (d421) -- (e421) -- (d124)
             (d421) .. controls +(-1,-1) and +(0,4) .. (c4) (e421) .. controls +(0,-1) and +(1,2) .. (c2) (d124) .. controls +(0,-2) and +(1,1) .. (c1)
                     (e432) -- (f432) -- (d432) -- (e432) -- (d234)
             (d432) -- (c4) (e432) -- (c3) (d234) .. controls +(2,0) and +(0,-4) .. (c2)
             (e342) -- (f342) -- (d243) (d342) -- (e342) -- (d243)
             (d342) -- (c3) (e342) .. controls +(-3,1) and +(0,-4) .. (c4) (d243) .. controls +(1,2) and +(0,-4) .. (c2)
             (g432) -- (e423) -- (f423) -- (d324) (d423) -- (e423) -- (d324)
             (d423) -- (g432) -- (d324)
             (d423) .. controls +(-1,1) and +(0,-4) .. (c4) (e423) .. controls +(0,1) and +(1,-2) .. (c2) (d324) .. controls +(0,2) and +(3,-1) .. (c3)  

             (g341) -- (e341) -- (f341) -- (d341) -- (e341) -- (d143)
             (d341) -- (g341) -- (d143)
             (d341) -- (c3) (e341) -- (c4) (d143) .. controls +(0,2) and +(-3,0) .. (c1)
             (e431) -- (f431) -- (d134) (d431) -- (e431) -- (d134)
             (d431) -- (c4) (e431) .. controls +(1,-3) and +(-3,0) .. (c3) (d134) .. controls +(2,1) and +(-3,0) .. (c1)
             (e314) -- (f314) -- (d413) (d314) -- (e314) -- (d413)
             (d314) .. controls +(1,-1) and +(-3,0) .. (c3) (e314) .. controls +(1,0) and +(-2,1) .. (c1) (d413) .. controls +(2,0) and +(-1,3) .. (c4)
 
             (e321) -- (f321) -- (d321) -- (e321) -- (d123)
             (d321) -- (c3) (e321) -- (c2) (d123) .. controls +(0,2) and +(3,0) .. (c1)
             (e231) -- (f231) -- (d132) (d231) -- (e231) -- (d132)
             (d231) -- (c2) (e231) .. controls +(-1,-3) and +(3,0) .. (c3) (d132) .. controls +(-2,1) and +(3,0) .. (c1)
             (g321) -- (e312) -- (f312) -- (d213) (d312) -- (e312) -- (d213)
             (d312) -- (g321) -- (d213)
             (d312) .. controls +(-1,-1) and +(3,0) .. (c3) (e312) .. controls +(-1,0) and +(2,1) .. (c1) (d213) .. controls +(-2,0) and +(1,1) .. (c2) 
;

\end{tikzpicture}}
\caption{A graph on $b(4)=75$ vertices with centroidal dimension $4$. In the central part of the figure, the 4 black vertices form the centroidal basis and have their vector $r$ of the form $(a,\{b,c,d\})$; they are at distance~3 from each other. The 12 white circle-shaped vertices have their vector of the form $(a,b,\{c,d\})$. The 6 gray square-shaped vertices have their vector of the form $(\{a,b\},\{c,d\})$. The unique gray double-circled vertex has its vector of the form $(\{a,b,c,d\})$. 
In the outer parts of the figure, the 24 gray circled-shaped vertices have their vector of the form $(a,b,c,d)$. The 12 white square-shaped vertices have their vector of the form $(a,\{b,c\},d)$. The 12 white double-circled vertices have their vector of the form $(\{a,b\},c,d)$. Finally, the 4 gray double-squared vertices have their vector of the form $(\{a,b,c\},d)$.}\label{fig:G75}
\end{figure}

\medskip
\noindent\textbf{Bounded diameter.} We now discuss the tightness of
the bounds for diameter~2 and~3 of Theorem~\ref{thm:diam3}.

\begin{proposition}\label{prop:diam2-construct}
For any $k\geq 4$, there is a graph $G$ of diameter~2 with $n=2^k+k-1$
vertices and $\M(G)=k$.
\end{proposition}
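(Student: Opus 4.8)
The plan is to exhibit an explicit diameter-$2$ graph $G$ on exactly $n=2^k+k-1$ vertices together with a centroidal locating set $B$ of size $k$, and then to read off optimality directly from Theorem~\ref{thm:diam3}. Indeed, once such a $G$ is built, that theorem yields $2^k+k-1=n\leq 2^{\M(G)}+\M(G)-1$, and since $m\mapsto 2^m+m$ is strictly increasing this forces $\M(G)\geq k$; as $B$ witnesses $\M(G)\leq k$, we conclude $\M(G)=k$. So the entire content of the proof lies in the construction and in verifying that $B$ locates every vertex.

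For the construction, recall from the proof of Theorem~\ref{thm:diam3} that in a diameter-$2$ graph every vertex $v$ outside the basis has $r$-vector $(N(v)\cap B,\,B\setminus N(v))$, an ordered partition of $B$ into at most two parts. I would let $B=\{b_1,\ldots,b_k\}$ be the intended basis and attach, for every nonempty proper subset $S\subsetneq B$, one vertex $v_S$ adjacent within $B$ to exactly $S$ (realising the $2^k-2$ two-part partitions), plus one extra vertex $v_*$ with no neighbour in $B$ (realising the one-part partition $(B)$). To guarantee diameter $2$ I would make all these ``outside'' vertices pairwise adjacent, forming a clique $W$: this forces every $v_S$–$v_{S'}$ distance to equal $1$ and, since each $v_S$ is joined through $W$ to some $v_T$ with $b_j\in T$, every $v_S$–$b_j$ distance to be at most $2$, while creating no new adjacency between an outside vertex and $B$ (so the first block $N(v)\cap B$ of each outside $r$-vector is unaffected). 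Finally I would put internal edges on $B$ forming a cycle $b_1b_2\cdots b_kb_1$. Counting gives $|B|+|W|=k+(2^k-2)+1=2^k+k-1$ vertices, as required.

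The delicate point — and the reason the internal structure on $B$ cannot be arbitrary — is to keep the $k$ basis vertices from colliding with the outside ones. If $B$ were a clique (or independent), each $b_i$ would see every other basis vertex at equal distance, so $r(b_i)$ would be the two-part partition $(\{b_i\},B\setminus\{b_i\})$, coinciding with $r(v_{\{b_i\}})$; this would break the locating property and cost exactly the $k$ vertices we are trying to save. The role of the cycle is precisely to give each $b_i$ both a neighbour and a non-neighbour inside $B$ (possible since $k\geq 4$), so that $r(b_i)=(\{b_i\},\{b_{i-1},b_{i+1}\},B\setminus\{b_{i-1},b_i,b_{i+1}\})$ is a genuine three-part partition, automatically distinct from every (at most two-part) outside vector and, thanks to its singleton first block, from the vectors of the other basis vertices. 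I would then confirm that all non-cycle distances inside $B$ are exactly $2$, via the common neighbour $v_{\{b_i,b_j\}}$, which simultaneously certifies that $G$ has diameter $2$ and that the three-part vectors are correct. Collecting the three families — $k$ three-part vectors, $2^k-2$ two-part vectors, and the single one-part vector $(B)$ — shows all $r$-vectors are pairwise distinct, so $B$ is a centroidal locating set and the first paragraph finishes the proof. I expect the bookkeeping of distances after adding the clique $W$ to be the only genuinely error-prone step.
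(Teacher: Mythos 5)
Your proof is correct and takes essentially the same approach as the paper: a basis $B$ whose induced subgraph gives every basis vertex both a neighbour and a non-neighbour (your $k$-cycle is one such choice), together with an external clique whose vertices realise all the one- and two-part ordered partitions of $B$, with the lower bound $\M(G)\geq k$ read off from Theorem~\ref{thm:diam3}. The only cosmetic difference is that the paper realises the vector $(B)$ by a clique vertex adjacent to \emph{all} of $B$ (which instantly certifies diameter~2 via a universal vertex), whereas you use a vertex with \emph{no} neighbour in $B$ and check the distances by hand; both variants work, and you also make explicit the monotonicity argument for optimality that the paper leaves implicit.
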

\begin{proof}
We construct $G$ in the following way. We let $V(G)=B\cup S$, where $B,S$
are disjoint sets, $B$ has $k$ vertices, and $S$ has $n-k=2^k-1$
vertices. We make sure that in the subgraph induced by $B$, every
vertex has a neighbor and a non-neighbor, which is possible since
$k\geq 4$. The set $S$ induces a clique and the neighborhood of every
vertex of $S$ within $B$ is distinct and nonempty.

Since $G$ has a vertex of degree $n-1$ (in $S$) the diameter is~2. For
every vertex $v$ in $B$, $r(v)=(\{v\},N(v)\cap B,B\setminus N[v])$,
and by our assumption on the structure of $B$, these three sets are
nonempty. For every vertex $v$ in $S$, $r(v)=(N(v)\cap B, B\setminus
N(v))$. By construction all these vectors are distinct.
\end{proof}

In fact, in the construction of
Proposition~\ref{prop:diam2-construct}, $B$ is a locating-dominating
set; similar constructions are well-known in this context, see for
example Slater~\cite{S88}. For diameter~3, we give a more complicated
construction:

\begin{theorem}\label{thm:diam3-construct}
For any $k\geq 4$, there is a graph $G$ of diameter~3 with
$n=3^k-2^{k+1}+2$ vertices and $\M(G)=k$.
\end{theorem}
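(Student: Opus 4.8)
The plan is to realise \emph{every} ordered partition of $B$ into at most three nonempty parts by exactly one vertex, using the canonical distances $1,2,3$. By the count inside the proof of Theorem~\ref{thm:diam3} there are exactly $3^k-2^{k+1}+2$ such partitions, so this forces $n$ to be correct from the outset. I encode a partition by the function $f\colon B\to\{1,2,3\}$ sending a vertex lying in the $i$-th part to $i$; these are precisely the functions whose image is an initial segment of $\{1,2,3\}$ (i.e. $f^{-1}(1)\neq\emptyset$, and $f^{-1}(3)\neq\emptyset$ forces $f^{-1}(2)\neq\emptyset$). I set $V(G)=B\cup W$, where $W$ consists of vertices $v_f$ for all such $f$ \emph{except} the $k$ functions $f_i$ (equal to $1$ on $b_i$ and $2$ elsewhere), whose role is played by $b_i$ itself. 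The edges are: $b_i\sim v_f$ iff $f(b_i)=1$; no edge inside $B$; and $v_f\sim v_g$ iff $\|f-g\|_\infty\le 1$. The design goal is $d(v_f,b)=f(b)$ for every $b\in B$ (with $d(b_i,b_i)=0$ and $d(b_i,b)=2$ otherwise), which makes $r(v_f)$ equal to the partition encoded by $f$; since $f\mapsto r(v_f)$ is then a bijection onto all $\le 3$-part partitions, all $n$ vertices get distinct vectors and $B$ is a centroidal locating set, giving $\M(G)\le k$.

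To verify the profiles I would prove the two inequalities separately. For the lower bound $d(v_f,b)\ge f(b)$, fix a target $b$ and assign the potential $\lambda_b(b)=0$, $\lambda_b(b_j)=2$ for basis $b_j\neq b$, and $\lambda_b(v_f)=f(b)$ on $W$; a one-line check on each of the three edge types (using $g(b_i)=1$ on basis edges and $\|f-g\|_\infty\le 1$ on $W$-edges) shows that every edge changes $\lambda_b$ by at most $1$, so any path from $v_f$ to $b$ has length at least $f(b)$. The upper bound is by explicit short paths through $c:=v_{\mathbf 1}$ (the constant $1$, which is adjacent to all of $B$): for $f(b)=2$ a common neighbour $v_g$ with $g(b)=1$ gives distance $2$, and for $f(b)=3$ the path $v_f-v_g-c-b$ with $g$ a suitable $\{1,2\}$-valued approximant of $f$ gives distance $3$; the forbidden shorter routes are ruled out by the potential bound. (Whenever an auxiliary function threatens to coincide with some $f_j$, I re-choose it as a non-basis $\{1,2\}$-valued function, which exists because $k\ge 4$ leaves at least two free coordinates.)

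The main obstacle is forcing the diameter to be exactly $3$. Here the natural construction fails: two ``opposite'' genuine $3$-part vertices, e.g. those for $(1,2,3,3)$ and $(3,3,1,2)$, are each at distance $2$ from $c$ and have no common neighbour, so a centre-route only gives distance $4$. The fix exploits the flexibility of the $\|\cdot\|_\infty\le 1$ rule: for any $f$ put $f':=\min(f,2)$, a valid $\{1,2\}$-valued vertex with $\|f-f'\|_\infty\le 1$. Then $v_f-v_{f'}-v_{g'}-v_g$ is a walk of length at most $3$, the crucial point being that $\|f'-g'\|_\infty\le 1$ \emph{always} holds since both take values in $\{1,2\}$, so the middle edge $v_{f'}\sim v_{g'}$ exists unconditionally. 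The same idea (with $c$ as one intermediate) handles pairs involving $B$: $d(b_i,v_g)\le 3$ via $b_i-c-g'-v_g$ and $d(b_i,b_j)=2$ via $c$. This yields diameter $\le 3$, and since $d(v_f,b)=3$ for $f(b)=3$ the diameter is exactly $3$.

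Finally I would establish $\M(G)\ge k$. If $G$ had a centroidal locating set of size $k'\le k-1$, then, as $G$ has diameter $3$, Proposition~\ref{prop:diam} gives $n\le k'+3^{k'}\le (k-1)+3^{k-1}$; but $n=3^k-2^{k+1}+2$, and the elementary inequality $2\cdot 3^{k-1}>2^{k+1}+k-3$ holds for all $k\ge 4$, a contradiction. Hence $\M(G)=k$, completing the proof. I expect the only genuinely delicate part to be the diameter bound and the bookkeeping that keeps all chosen intermediate vertices inside $W$; the profile computation and the lower bound on $\M(G)$ should be routine.
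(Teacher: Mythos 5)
Your construction is correct, but it is genuinely different from the paper's. The paper builds a concrete four-layer graph $B\cup X\cup Y\cup Z$: a clique $X$ with a vertex $x(S)$ for each $S\subseteq B$ with $2\le|S|\le k-2$, a clique $Z$ with vertices $z(S)$ for $|S|\ge k-1$, and an independent set $Y$ with a vertex $y(S,T)$ for each three-part partition, adjacent to $S$ and to one vertex of $X$ (namely $x(T)$, or $x(T')$ when $|T|=1$); the profile of every vertex is then read off directly from its adjacencies, with no path-chasing. You instead take the target distance vectors themselves as the vertices --- the functions $f\colon B\to\{1,2,3\}$ whose image is an initial segment --- with the $\ell_\infty$-adjacency rule, substituting $b_i$ for the reserved vector $f_i$, and you verify $d(v_f,b)=f(b)$ by a potential (Lipschitz) argument for the lower bound and explicit paths through the all-ones vertex for the upper bound; I checked that both halves, as well as your three-step routing $v_f-v_{f''}-v_{g''}-v_g$ for the diameter, go through. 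Both proofs share the same counting skeleton (realize each of the $3^k-2^{k+1}+2$ ordered partitions into at most three parts exactly once), and both need a patch in a degenerate case: the paper when $|T|=1$, you when an auxiliary $\{1,2\}$-valued vector collides with some $f_j$. Your re-choosing step is indeed necessary (for $f=(1,2,3,3)$ one has $\min(f,2)=f_1$), and it works, though the correct reason is not that ``$k\ge 4$ leaves two free coordinates'' but that validity of $f$ (a 3 forces a 2) provides a coordinate where $f=2$ that can be lowered to 1, producing a vector with two 1's and hence not of the form $f_j$. As for what each approach buys: yours is more systematic --- the potential function disposes of all distance lower bounds at once, the recipe visibly generalizes to other diameters, and you prove $\M(G)\ge k$ explicitly via Proposition~\ref{prop:diam} for all $k\ge 4$, a point the paper leaves implicit (and which Theorem~\ref{thm:diam3} alone would only settle for $k\ge 5$); the paper's layered construction buys immediacy, since every vertex's vector $r(\cdot)$ is visible at a glance from the cliques and attachments.
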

\begin{proof}
We construct $G$ as follows (see Figure~\ref{fig:diam3} for an
illustration).
\begin{itemize}
\item Let $B$ be an independent set of size~$k$, which will be the
  centroidal locating set of $G$.
\item Let $X$ be a clique containing, for every subset $S$ of $B$ with
  $2\leq |S|\leq k-2$, a vertex $x(S)$ that is adjacent to all
  vertices in $S$. Set $X$ has size $\sum_{i=2}^{k-2}{k\choose i}$.
\item Let $Y$ be an independent set containing, for every subset $S$
  of $B$ with $1\leq |S|\leq k-2$ and for every proper nonempty subset
  $T$ of $B\setminus S$, a vertex $y(S,T)$ (note that $1\leq |T|\leq
  k-2$). Vertex $y(S,T)$ is adjacent to all vertices of $S$. Moreover,
  if $|T|\geq 2$, $y(S,T)$ is adjacent to $x(T)$. If $T=\{t\}$ has
  size~1, let $T'$ be an arbitrary size~2-subset of $B$ formed by $t$
  and an arbitrary vertex of $S$, and let $y(S,T)$ be adjacent to
  $x(T')$. Note that set $Y$ has size $\sum_{i=1}^{k-2}\left({k\choose
    i}(2^{k-i}-2)\right)$.
\item Let $Z$ be a clique of size $k+1$ containing, for each subset $S$ of $B$ with
  $k-1\leq |S|\leq k$, a vertex $z(S)$ that is adjacent to the
  vertices in $S$.
\end{itemize}

\begin{figure}[htpb!]
\centering
\includegraphics[width=0.55\textwidth]{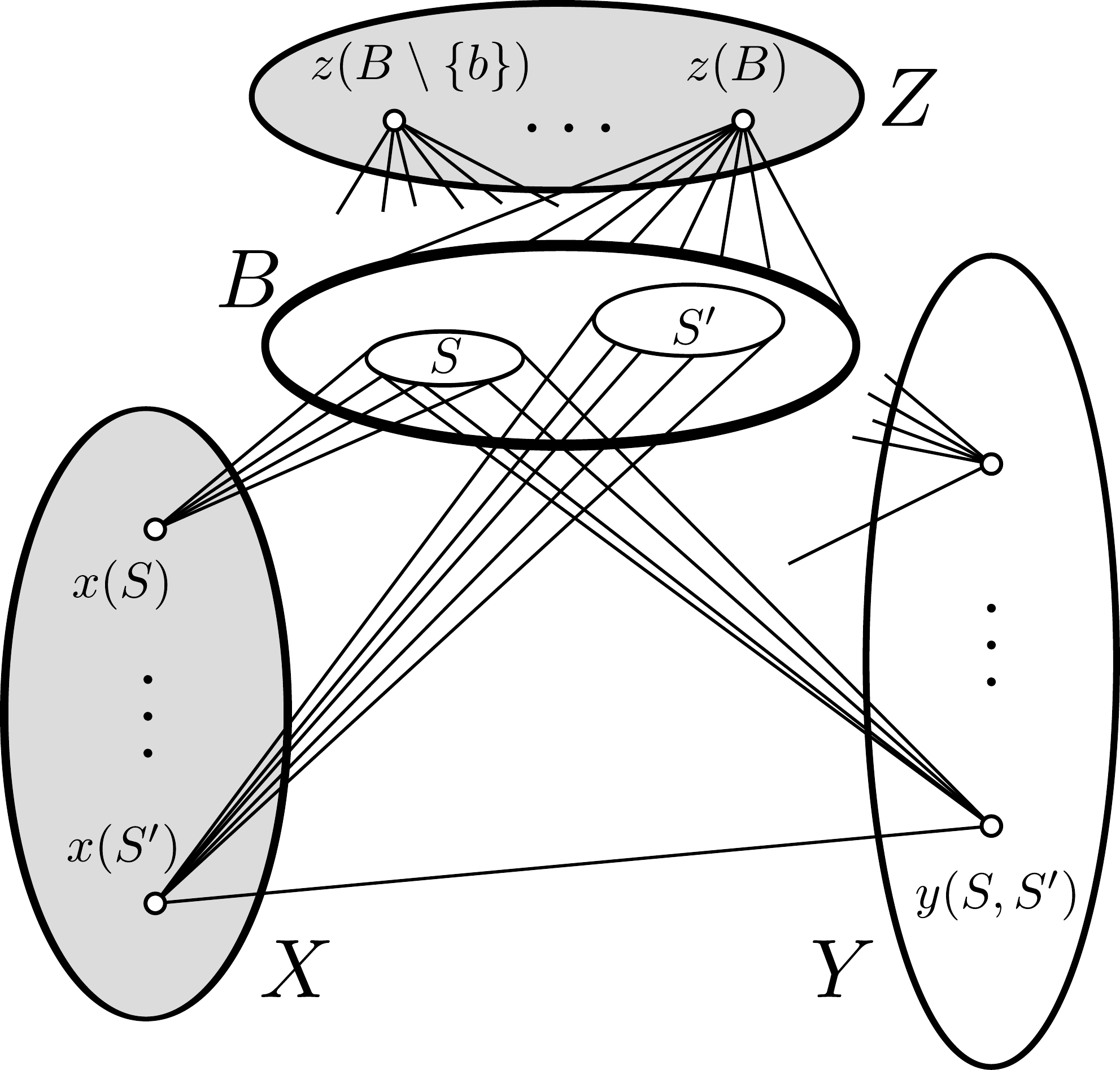}
\caption{The construction of Theorem~\ref{thm:diam3-construct}. Gray sets are cliques.}
\label{fig:diam3}
\end{figure}

The order of $G$ is

\begin{align*}
|B|+|X|+|Y|+|Z| &= k+\sum_{i=2}^{k-2}{k\choose i}+\sum_{i=1}^{k-2}\left({k\choose
    i}(2^{k-i}-2)\right)+k+1\\ &= 1+\sum_{i=1}^{k}\left({k\choose i}(2^{k-i}-1)\right) \\& =3^k-2^{k+1}+2.
\end{align*}

Furthermore, the diameter of $G$ is exactly~3. It is at least~3:
consider some vertex $y=y(S,T)$. We have $N(y)=S$, and the vertices in
$T$ are at distance~2 of $y$ (via vertex $x(T)$). However, all
vertices of $B\setminus (S\cup T)$ are at distance~3 of $y$, and since
$T$ is a proper subset of $B\setminus S$, $B\setminus (S\cup T)$ is
nonempty. On the other hand, the diameter is at most~3. If $v$ is any
vertex of $G$, $v$ is within distance~1 of some vertex $b$ in $B$, and
since $X$ and $Z$ are cliques and $b$ has a neighbor in both $X$ and
$Z$, every vertex in $X\cup Z$ is within distance~2 of $b$ and within
distance~3 of $v$. Moreover, every two vertices in $B$ are at distance
at most~2 away since they all share a neighbor, $z(B)$. Hence, any
vertex $y(S,T)\in Y$, since it has a neighbor in $B$, is at distance
at most~3 from any vertex in $B$. Finally, any vertex in $Y$ has a
neighbor in $X$, which is a clique; hence any two vertices in $Y$ have
distance at most~3 from each other.

It remains to check that $B$ is a centroidal locating set. For every
$b\in B$, we have $r(b)=(\{b\},B\setminus\{b\})$. For every vertex
$x=x(S)$ in $X$, we have $r(x)=(S,B\setminus S)$ and $2\leq |S|\leq
k-2$. For every vertex $z=z(S)$ in $Z$, $r(z)=\{S,B\setminus S\}$ if
$|S|=k-1$ and $r((z(B))=(B)$. We have now realised all possible
vectors with at most two components. Finally, for any vertex
$y=y(S,T)$ in $Y$, $r(y)=(S,T,B\setminus (S\cup T))$ and by the
definition of $Y$, none of these three components is empty. This
completes the proof.
\end{proof}

We leave the question of the tightness of Theorems~\ref{thm:LB} and~\ref{thm:diam} open.

\begin{question}\label{quest:LB}
Is the bound $\M(G)\geq(1+o(1))\frac{\ln n}{\ln\ln n}$
asymptotically tight, that is, can we find an infinite family of graphs
$G$ with $\M(G)=O\left(\frac{\ln n}{\ln\ln n}\right)$?
\end{question}

Observe that, by Proposition~\ref{prop:diam},
$\M(G)\geq\log_D(n)-1$ when $G$ has diameter $D$ and $n$
vertices. Hence, in order to construct a graph with a centroidal
locating set of size $O\left(\frac{\ln n}{f(n)}\right)$ for some
$f(n)=O(\ln\ln n)$, $G$ should have diameter
$\Omega\left(e^{f(n)}\right)$; in particular, for $f(n)=\ln\ln n$,
the diameter should be $\Omega\left(\ln n\right)$.

It was proved by Seb\H{o} and Tannier~\cite{ST04} that for the
hypercube $Q_k$ with $n=2^k$ vertices,
$\MD(Q_k)=\frac{2\log_2 n}{\log_2\log_2 n}(1+o(1))$. In this regard, it would be
interesting to determine whether the family of hypercubes is a
positive answer to Question~\ref{quest:LB}; determining $\M(Q_k)$
would be of independent interest.

\begin{question}\label{quest:LB-diam}
What is the maximum order of a graph with centroidal dimension~$k$ and
diameter~$D\geq 4$?
\end{question}

\subsection{Graphs with large centroidal dimension}

A direct consequence of Theorem~\ref{thm:LB} is that a graph has
centroidal dimension equal to its order if and only if it has maximum
degree at most~1. We now fully characterize the set of graph with
centroidal dimension of value the order minus one.

For some $n\geq 1$, $K_n$ denotes the complete graph on $n$
vertices. For some $a,b\geq 1$, $K_{a,b}$ denotes the complete
bipartite graph with parts of sizes $a$ and $b$. Let $n\geq 4$. We
denote by $S_n$, the graph obtained by joining $K_2$ to an independent
set of $n-2$ vertices. We call $T_n$ the tree obtained from $P_3$ by
attaching $n-3$ degree~1-vertices to one of the ends of
$P_3$. Finally, we call $U_n$ the graph obtained from $K_3$ by
attaching $n-3$ degree~1-vertices to one of the vertices of $K_3$.

In particular, $K_{1,n-1}$ is a star, $K_{2,2}$ is the cycle $C_4$,
$S_4$ is the diamond graph, and $T_4$ is the path $P_4$. See
Figure~\ref{fig:extremal} for illustrations of these graphs (black
vertices belong to a centroidal basis).

\begin{figure}[!htpb]
\centering
\subfigure[$K_{n}$]{
\scalebox{1.0}{\begin{tikzpicture}[join=bevel,inner sep=0.5mm,scale=0.8,line width=0.5pt]
\path (0:2cm) node[draw,shape=circle] (x0) {};
\path (2*60:2cm) node[draw,shape=circle,fill=black] (x1) {};
\path (3*60:2cm) node[draw,shape=circle,fill=black] (x2) {};
\path (4*60:2cm) node[draw,shape=circle,fill=black] (x3) {};
\path (5*60:2cm) node[draw,shape=circle,fill=black] (x4) {};

\draw[thick] (x0) -- (x1) -- (x2) -- (x3) -- (x4) -- (x0) -- (x2) -- (x4) -- (x1) -- (x3) -- (x0)
      (x3)+(2,2.5) -- (x3)
      (x3)+(1.5,3) -- (x3)
      (x3)+(1.1,3.1) -- (x3)
      (x4)+(0.1,2.5) -- (x4)
      (x4)+(-0.7,3) -- (x4)
      (x4)+(0.6,2.9) -- (x4)
      (x2)+(2.5,0.4) -- (x2)
      (x2)+(2.3,1.2) -- (x2)
      (x2)+(2.8,0.7) -- (x2);
\draw (1,1.4) node[rotate=-30] {$\ldots$};
\end{tikzpicture}}}\qquad
\subfigure[$K_{1,n-1}$]{
\scalebox{1.0}{\begin{tikzpicture}[join=bevel,inner sep=0.5mm,scale=0.8,line width=0.5pt]
\path (0:2cm) node[draw,shape=circle,fill=black] (P0) {};
\path (2*60:2cm) node[draw,shape=circle,fill=black] (P1) {};
\path (3*60:2cm) node[draw,shape=circle,fill=black] (P2) {};
\path (4*60:2cm) node[draw,shape=circle,fill=black] (P3) {};
\path (5*60:2cm) node[draw,shape=circle,fill=black] (P4) {};
\path (0,0) node [draw,shape=circle] (C) {};
\draw[thick] (P0) -- (C) 
      (P1) -- (C) 
      (P2) -- (C) 
      (P3) -- (C) 
      (P4) -- (C)
      (C)+(1,0.8) -- (C)
      (C)+(0.7,1.1) -- (C)
      (C)+(0.35,1.2) -- (C);
\draw (C)+(1,1.4) node[rotate=-30] {$\ldots$};

\end{tikzpicture}}}\qquad
\subfigure[$K_{2,n-2}$]{
\scalebox{1.0}{\begin{tikzpicture}[join=bevel,inner sep=0.5mm,scale=0.8,line width=0.5pt]
\path (0,0) node[draw,shape=circle,fill=black] (c0) {};
\path (4,0) node[draw,shape=circle] (c1) {};

\path (2,1.6) node[draw,shape=circle,fill=black] (P0) {};
\path (2,0.8) node[draw,shape=circle,fill=black] (P1) {};
\path (2,0) node[draw,shape=circle,fill=black] (P2) {};
\path (2,-2) node[draw,shape=circle,fill=black] (P3) {};

\draw[thick] (c1) -- (P0) -- (c0) -- (P1) -- (c1) -- (P2) -- (c0) -- (P3) -- (c1);
\draw (2,-1) node[rotate=90] {$\ldots$};
\end{tikzpicture}}}\\
\subfigure[$S_n$]{
\scalebox{1.0}{\begin{tikzpicture}[join=bevel,inner sep=0.5mm,scale=0.8,line width=0.5pt]
\path (0,0) node[draw,shape=circle,fill=black] (c0) {};
\path (2,0) node[draw,shape=circle] (c1) {};

\path (-1,3) node[draw,shape=circle,fill=black] (P0) {};
\path (0,3) node[draw,shape=circle,fill=black] (P1) {};
\path (1,3) node[draw,shape=circle,fill=black] (P2) {};
\path (3,3) node[draw,shape=circle,fill=black] (P3) {};

\draw[thick] (c0) -- (c1) -- (P0) -- (c0) -- (P1) -- (c1) -- (P2) -- (c0) -- (P3) -- (c1);
\draw (2,3) node {$\ldots$};
\end{tikzpicture}}}\qquad
\subfigure[$T_n$]{
\scalebox{1.0}{\begin{tikzpicture}[join=bevel,inner sep=0.5mm,scale=0.8,line width=0.5pt]
\path (-1.4,0) node[draw,shape=circle,fill=black] (c0) {};
\path (0,0) node[draw,shape=circle,fill=black] (c1) {};
\path (0,1.4) node[draw,shape=circle] (c2) {};

\path (-2,3) node[draw,shape=circle,fill=black] (P0) {};
\path (-1,3) node[draw,shape=circle,fill=black] (P1) {};
\path (0,3) node[draw,shape=circle,fill=black] (P2) {};
\path (2,3) node[draw,shape=circle,fill=black] (P3) {};

\draw[thick] (c0) -- (c1) -- (c2) -- (P1)
             (c2) -- (P2)
             (c2) -- (P3)
             (c2) -- (P0);
\draw (1,3) node {$\ldots$};
\end{tikzpicture}}}\qquad
\subfigure[$U_n$]{
\scalebox{1.0}{\begin{tikzpicture}[join=bevel,inner sep=0.5mm,scale=0.8,line width=0.5pt]
\path (-1,0) node[draw,shape=circle,fill=black] (c0) {};
\path (1,0) node[draw,shape=circle,fill=black] (c1) {};
\path (0,1.4) node[draw,shape=circle] (c2) {};

\path (-2,3) node[draw,shape=circle,fill=black] (P0) {};
\path (-1,3) node[draw,shape=circle,fill=black] (P1) {};
\path (0,3) node[draw,shape=circle,fill=black] (P2) {};
\path (2,3) node[draw,shape=circle,fill=black] (P3) {};

\draw[thick] (c2) -- (c0) -- (c1) -- (c2) -- (P1)
             (c2) -- (P2)
             (c2) -- (P3)
             (c2) -- (P0);
\draw (1,3) node {$\ldots$};
\end{tikzpicture}}}

\caption{The list of graphs on $n$ vertices with centroidal dimension $n-1$.}\label{fig:extremal}
\end{figure}
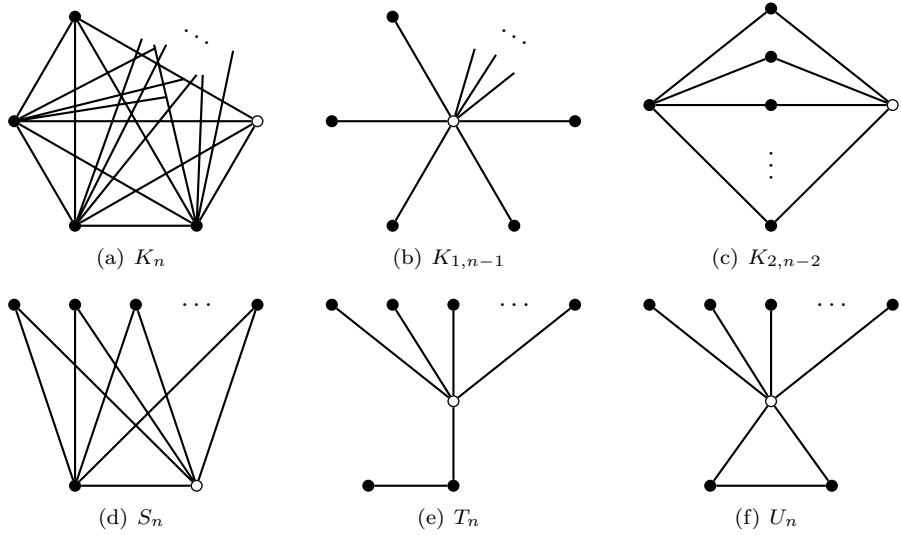

\begin{proposition}
Let $G$ be a graph on $n\geq 3$ vertices belonging to
$\{K_n,K_{1,n-1},K_{2,n-2},S_n,T_n,U_n\}$. Then $\M(G)=n-1$.
\end{proposition}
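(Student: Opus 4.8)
The plan is to prove the two inequalities separately; the upper bound is immediate and the lower bound is the crux. For $\M(G)\le n-1$, each graph in the list has a vertex $u$ of degree at least~$2$ (the center of $K_{1,n-1}$, the middle vertex of $P_3=K_{2,1}$ when $n=3$, and an obvious high-degree vertex otherwise), so Lemma~\ref{lemma:deg2} applied with $S=\{u\}$ shows that $V(G)\setminus\{u\}$ is a centroidal locating set, giving $\M(G)\le n-1$.

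For the lower bound, I would first record a reduction: since any superset of a centroidal locating set is again a centroidal locating set, it suffices to prove that no $(n-2)$-element set is a centroidal locating set; that is, for every choice of two vertices to omit, some two distinct vertices receive equal rank orderings $r$. The engine is the forcing provided by Lemma~\ref{lemma:deg1} (every degree-$1$ vertex lies in every centroidal locating set) and Lemma~\ref{lemma:twins} (from any open- or closed-twin pair at least one vertex lies in every centroidal locating set). For $K_n$ all vertices are mutual closed twins, so at most one can be omitted and $\M(K_n)\ge n-1$ follows with nothing further to check; for $K_{1,n-1}$ all $n-1$ leaves are forced, so again at most one vertex can be omitted. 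In each of the four remaining graphs the forcing pins the omitted pair down to a single type (up to symmetry): in $K_{2,n-2}$ the $n-2$ vertices of the big part are pairwise open twins and the two apices are open twins, so the omitted pair must be one apex and one part-vertex; in $S_n$ the $n-2$ independent vertices are open twins and the two apices are closed twins, forcing one apex and one independent vertex; in $T_n$ the $n-2$ degree-$1$ vertices are forced, so the omitted pair is exactly the two vertices of degree at least~$2$; and in $U_n$ the $n-3$ leaves are forced and the two non-dominating triangle vertices are closed twins, so the omitted pair is the dominating triangle vertex together with one of the other two.

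It then remains, in these four cases, to exhibit a single collision, and this is the step I expect to require the most care, because the colliding pair is not the two omitted vertices but a \emph{removed} vertex $x$ and a \emph{retained} vertex $b\in B$. In each case $x$ turns out to have exactly one neighbour $b$ in $B$, while every other vertex of $B$ lies at one common distance from $x$ and at one common distance from $b$; consequently both orderings collapse to $r(x)=(\{b\},\,B\setminus\{b\})=r(b)$. For instance, omitting one apex $a_1$ and one part-vertex $b_1$ of $K_{2,n-2}$ leaves the surviving apex $a_2$ as the unique $B$-neighbour of $b_1$, with all remaining part-vertices at distance~$2$ from $b_1$ and distance~$1$ from $a_2$, so $r(b_1)=r(a_2)$. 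The same bookkeeping handles $S_n$ (surviving apex versus removed independent vertex), $T_n$ (the retained leaf-end of the $P_3$ versus the removed degree-$2$ vertex, with the attached leaves uniformly farther), and $U_n$ (the retained triangle vertex versus the removed one). Each collision is a one-line distance check once the omitted pair is fixed.

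The main obstacle is therefore conceptual rather than computational: making the twin-and-degree forcing tight enough to isolate a single candidate omission in each graph, and recognising that the resulting conflict must be sought between a removed and a retained vertex rather than between the two removed ones. Once this is in place, combining the reduction with the explicit collisions yields $\M(G)\ge n-1$, which together with the upper bound gives $\M(G)=n-1$ for every graph in the list.
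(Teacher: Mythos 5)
Your proposal is correct and follows essentially the same route as the paper: upper bound via Lemma~\ref{lemma:deg2} applied to a single vertex of degree at least~2, and lower bound by forcing vertices with Lemma~\ref{lemma:deg1} (leaves and twins) and then exhibiting a collision $r(x)=(\{b\},B\setminus\{b\})=r(b)$ between a removed vertex and its unique retained neighbour. The only cosmetic difference is that for $T_n$ you re-derive this collision directly where the paper invokes Lemma~\ref{lemma:deg1}(b), whose proof is exactly that collision.
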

\begin{proof}
By Lemma~\ref{lemma:deg2} applied on a single degree~2-vertex, it is
easily seen that in each case, $\M(G)\leq n-1$. If $G$ is isomorphic
to $K_n$, the lower bound is directly implied by
Lemma~\ref{lemma:twins}(c).

If $G$ is isomorphic to $K_{1,n-1}$, then by Lemma~\ref{lemma:deg1}(a)
any centroidal locating set contains all $n-1$ leaves of $K_{1,n-1}$.

If $G$ is isomorphic to $K_{2,n-2}$ or to $S_n$, by
Lemma~\ref{lemma:twins}(c) at least $n-3$ vertices of degree~2 belong to
any centroidal locating set, as well as one of the other two
vertices. However, if no further vertex does belong to the centroidal
locating set, then the vertex of degree~2 that is not in the set is
not distinguished from its neighbour in the set, a contradiction.

If $G$ is isomorphic to $T_n$, by Lemma~\ref{lemma:deg1}(a) all $n-2$
vertices of degree~1 belong to any centroidal locating set. Moreover,
one further vertex also does according to Lemma~\ref{lemma:deg1-2}(b).

Finally, if $G$ is isomorphic to $U_n$, again by
Lemma~\ref{lemma:deg1}(a) all $n-3$ vertices of degree~1 belong to any
centroidal locating set, as well as one degree~2-vertex of the
triangle by Lemma~\ref{lemma:twins}(c). If no further vertex does belong
to a centroidal locating set, then the two degree~2-vertices of the
triangle are not distinguished, a contradiction.
\end{proof}

In fact, the graphs from Figure~\ref{fig:extremal} are the only
extremal graphs, as shown in the following theorem:

\begin{theorem}
Let $G$ be a connected graph on $n\geq 3$ vertices with
$\M(G)=n-1$. Then $G$ belongs to $\{K_n,K_{1,n-1},$
$K_{2,n-2},S_n,T_n,U_n\}$.
\end{theorem}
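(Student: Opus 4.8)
The plan is to convert the equality $\M(G)=n-1$ into a statement about which pairs of vertices can be deleted, and then read off the structure of $G$. Since $G$ is connected with $n\ge 3$ it has a vertex of degree at least~$2$, so Theorem~\ref{thm:LB} together with Lemma~\ref{lemma:deg2} already gives $\M(G)\le n-1$; the content is the matching lower bound. Because every set of $n-2$ vertices is the complement of a pair $\{u,v\}$, one has $\M(G)\le n-2$ if and only if $V(G)\setminus\{u,v\}$ is a centroidal locating set for some pair; call such a pair \emph{removable}. Thus $\M(G)=n-1$ holds exactly when no pair is removable, and the whole argument consists in showing that ``no removable pair'' forces $G$ into the six families, the reverse implication being the preceding proposition.

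First I would dispose of leaves. If $\ell$ is a leaf with support $s$, then for any $v$, in $B=V(G)\setminus\{\ell,v\}$ one has $d(\ell,z)=1+d(s,z)$ for all $z\in B$, so $\ell$ and $s$ induce the same rank ordering and are confused; hence no pair containing a leaf is removable. Consequently only pairs inside $H:=\{u:\deg(u)\ge 2\}$ can be removable, and I would classify these using Lemma~\ref{lemma:deg2} and a near-converse: a non-adjacent pair of $H$ is removable iff the two vertices are \emph{not} false twins ($N(u)\ne N(v)$); an adjacent pair of vertices of degree at least~$3$ is removable iff they are \emph{not} true twins ($N[u]\ne N[v]$); and the remaining adjacent pairs, those with a degree-$2$ endpoint, I would settle by a direct ordering computation (the light endpoint has a unique nearest vertex in $B$ and is typically confused with it). The upshot is two rigid necessary conditions: every non-adjacent pair in $H$ is a false-twin pair, so $G[H]$ is complete multipartite $K_{a_1,\dots,a_p}$ whose parts are the false-twin classes; and every adjacent pair of degree-$\ge 3$ vertices is a true-twin pair, which in a complete multipartite graph can only occur between singleton parts. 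I would also record that a part of size $\ge 2$ carries no private leaves, since false twins must share their neighbourhoods.

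With this in hand the enumeration becomes a finite case analysis on the partition type of $G[H]$ and the placement of leaves. If $p=1$ the degree-$\ge2$ vertices are pairwise non-adjacent with a common set of leaves, forcing a single centre and yielding $K_{1,n-1}$. If all parts are singletons, $G[H]=K_p$ is a clique with leaves hanging off some vertices; the true-twin condition shows that two clique vertices of degree $\ge 3$ cannot both carry leaves, giving $K_n$ when $p\ge 4$, the graphs $K_3$ or $U_n$ when $p=3$, and $T_n$ when $p=2$. If exactly one part is big, the no-private-leaf rule and the true-twin condition force $p=3$ with two singletons and no leaves, i.e.\ $S_n$. If at least two parts are big, the degree-$2$ analysis forces $p=2$ with part sizes $2$ and $n-2$ and no leaves, i.e.\ $K_{2,n-2}$. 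Since each of these six graphs is extremal by the preceding proposition, the list is exactly $\{K_n,K_{1,n-1},K_{2,n-2},S_n,T_n,U_n\}$.

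The main obstacle is the adjacent pairs with a degree-$2$ endpoint: here Lemma~\ref{lemma:deg2} does not apply, and one must argue directly whether the degree-$2$ vertex is confused with its unique nearby detector. This single computation is what keeps $T_n,U_n,S_n,K_{2,n-2}$ non-removable even though their relevant pairs are not twins, and simultaneously what rules out the tempting near-misses such as $K_3\vee\overline{K_{n-3}}$ or double brooms with $\min_i\ell_i\ge 2$, where a degree-$2$ vertex together with a distinct twin class jointly create a removable pair. Keeping this local ordering analysis consistent with the global conclusion that $G$ must consist of one large twin class plus a bounded ``core'' is the delicate part of the argument.
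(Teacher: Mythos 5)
Your proposal is correct, and it takes a genuinely different route from the paper's. The paper argues by contradiction and locally: it fixes one vertex $u$ of degree at least~2, first shows that no vertex can be at distance~3 from $u$, and then splits into cases ($\deg(u)=2$; $N(u)$ independent; $N(u)$ a clique; neither), in each non-listed configuration exhibiting a pair of vertices whose complement is a centroidal locating set via Lemma~\ref{lemma:deg2}. You instead set up a global dichotomy --- $\M(G)=n-1$ if and only if no pair is removable --- and convert Lemma~\ref{lemma:deg2} together with Lemma~\ref{lemma:twins}(c) into exact twin criteria: a non-adjacent pair in $H$ is removable iff the vertices are not false twins, and an adjacent pair of degree-$\geq 3$ vertices is removable iff they are not true twins (both equivalences do check out, since for such pairs $S=\{u,v\}$ the hypotheses of Lemma~\ref{lemma:deg2} reduce exactly to the non-twin conditions). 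This yields the structural fact that $G[H]$ is complete multipartite with the false-twin classes as parts and leaves attached only to singleton parts, after which a finite enumeration over part sizes produces the six families. Your route buys a systematic, reusable structure theorem and a near-complete classification of removable pairs; the paper's route buys brevity, never needing the global picture.

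One remark that actually simplifies your argument: the ``delicate'' third class of pairs (adjacent, with a degree-2 endpoint) is not needed at all for the implication being proved. Your two twin criteria already eliminate every near-miss you cite: in $K_3\vee\overline{K_{n-3}}=K_{n-3,1,1,1}$, a big-part vertex has degree~3, so any (big part, singleton) pair is adjacent, has both degrees at least~3, and is not a true-twin pair, hence is removable; in a double broom with at least two leaves on each side, the two centers have degree at least~3 and are not true twins, hence form a removable pair. Carrying out the enumeration with the two twin conditions alone lands exactly on $\{K_n,K_{1,n-1},K_{2,n-2},S_n,T_n,U_n\}$: one big part forces $p=3$ and no leaves on the singletons (i.e.\ $S_n$), two big parts force $K_{2,n-2}$, and the all-singleton cases give $K_n$, $U_n$, $T_n$, with $K_{1,n-1}$ from $p=1$. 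The direct ordering computation for degree-2 endpoints is required only for the converse (that the six families really have $\M=n-1$), which both you and the paper delegate to the preceding proposition. So the vagueness of ``typically confused with it'' is harmless here, but you should not present that computation as what rules out the near-misses --- it is the true-twin criterion that does.
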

\begin{proof}
We assume by contradiction that $\M(G)=n-1$ but $G$ does not belong to
the list. Hence, $n\geq 5$ since all connected graphs on three or four
vertices are in the list. Let $u$ be a vertex of $G$ of degree at
least~2.  By Lemma~\ref{lemma:deg2}, $V(G)\setminus\{u\}$ is a
centroidal locating set. First of all, observe that there is no vertex
at distance~3 of $u$: for contradiction, assume that
$z$ is such a vertex and let $t$ be a neighbour of $z$ with
$deg(t)\geq 2$ that lies on a path from $u$ to $z$. By
Lemma~\ref{lemma:deg2}, $B=V(G)\setminus\{u,t\}$ is a centroidal
locating set, a contradiction.

\vspace{0.5cm} We now assume that $deg(u)=2$. Let $v,w$ be the two
neighbours of $u$, and let $x,y$ be two other vertices in $G$ (they
exist since $n\geq 5$). Since there is no vertex at distance~3 of $u$,
both $x,y$ are neighbours of at least one of $v,w$. First, assume that
$x$ is a neighbour of both $v$ and $w$ and that $y$ is only a
neighbour of $v$. Then $B=V(G)\setminus\{v,w\}$ is a centroidal
locating set, a contradiction. Indeed, each vertex of $B$ is first
located by itself, then by its neighbours in $B$, while $v,w$ are
first located by a set of at least two vertices. Moreover, $v$ is (in
particular) first located by $y$, while $w$ is not.

Hence, either all vertices other than $u,v,w$ are adjacent to both
$v,w$, or none is. In the first case, if all common neighbours of
$v,w$ form an independent set, $G$ is isomorphic to either $K_{2,n-2}$
or $S_n$, a contradiction. Hence, there is an edge between two common
neighbours of $v,w$, say between $x,y$. Then, $B=V(G)\setminus\{u,x\}$
is a centroidal locating set, a contradiction. Indeed, each vertex of
$B$ is first located by itself, then by its neighbours in $B$, while
$u,x$ are first located by a set of at least two vertices. Moreover,
$x$ is (in particular) first located by $y$, while $u$ is not.

We now have that each vertex other than $u,v,w$ is adjacent to exactly
one of $v,w$. Let $S_v$ be the set of neighbours of $v$ other than
$u,w$ and $S_w$ be the set of neighbours of $w$ other than $u,v$. If
both $|S_v|,|S_w|\geq 1$, by Lemma~\ref{lemma:deg2},
$V(G)\setminus\{v,w\}$ is a centroidal locating set, a
contradiction. Otherwise, either $G$ is isomorphic to $T_n$ if $v,w$
are non-adjacent, or to $U_n$ otherwise, a contradiction in both
cases.

\vspace{0.5cm} By the previous discussion, $G$ has no
degree~2-vertices. Hence $deg(u)\geq 3$. Suppose moreover that $N(u)$
is an independent set. Since $G$ is not isomorphic to $K_{1,n-1}$,
there are vertices at distance~2 of $u$. Hence $u$ has a neighbour,
$v$, with two such vertices as neighbours (since $deg(v)\neq 2$): let
$x,y$ be these vertices. Then, $B=V(G)\setminus\{u,v\}$ is a
centroidal locating set, a contradiction. Indeed, all vertices but
$u,v$ are located first by themselves only, whereas $u$ is located
first by $N(u)\setminus\{v\}$ and $v$ by a set containing both $x,y$.

Now, we assume that $N(u)$ is a clique. Since $G$ is not a complete
graph, $u$ has a neighbour $v$, having a neighbour $x$ with
$d(u,x)=2$. Then, by Lemma~\ref{lemma:deg2}, $V(G)\setminus\{u,v\}$ is
a centroidal locating set of $G$, a contradiction.

Hence, $N(u)$ is neither an independent set, nor a clique: there is a
vertex $v$ in $N(u)$ with a neighbour $w$ and a non-neighbour $x$,
both being in $N(u)$. Since no vertex in $G$ has degree~2, $v$ has an
additional neighbour, $y$. But then by Lemma~\ref{lemma:deg2},
$V(G)\setminus\{u,v\}$ is a centroidal locating set of $G$, a
contradiction.
\end{proof}

\section{Graphs with few paths connecting each pair of vertices}\label{sec:paths-cycles}

We now study parameter $\M$ for graphs in which every pair of vertices
is connected via a bounded number $k$ of paths. We show that such
graphs have centroidal dimension
$\Omega\left(\sqrt{\frac{m}{k}}\right)$, where $m$ is the number of
edges. In particular, this applies to paths and cycles; for these
graphs, we show that the lower bound is asymptotically tight. These
cases are particularly interesting for the following reason: the
metric dimension of a path or a cycle with $n$ vertices is easily seen
to be constant (1 for any path, 2 for any cycle), whereas the
location-domination number is linear (roughly $\frac{2}{5}$th of the
vertices~\cite{S88}). In contrast, the centroidal dimension is about
the square-root of the order.

But first, the following technical lemma will be useful when showing
our lower bound.

\begin{lemma}\label{lemma:LBcycle-path}
Let $G$ be a graph with $u,v$ two adjacent vertices, and let $B$ be a
centroidal locating set. Then, there are two vertices $b_1,b_2$ of $B$
and a path $P:b_1-u-v-b_2$ such that at least one of the following
properties hold:
\begin{enumerate}
\item $\{u,v\}=\{b_1,b_2\}$ and $P$ is the edge $\{u,v\}$;
\item $d(u,b_1)=d(u,b_2)$, $d(v,b_1)\neq d(v,b_2)$ (or, symmetrically,
  $d(v,b_1)=d(v,b_2)$, $d(u,b_1)\neq d(u,b_2)$ and $P$ contains a
  shortest path from $u$ to $b_1$ and a shortest path from $v$ to
  $b_2$;
\item $d(u,b_1)+1=d(u,b_2)$, $d(v,b_1)=d(v,b_2)+1$, $P$ contains a
  shortest path from $u$ to $b_1$ and a shortest path from $v$ to
  $b_2$, $P$ has odd length and $\{u,v\}$ is the middle edge of $P$.
\end{enumerate}
\end{lemma}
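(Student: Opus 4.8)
The plan is to apply the reformulation in Observation~\ref{obs:otherdef} to the pair $u,v$ and then exploit the elementary fact that, since $u$ and $v$ are adjacent, for every vertex $w$ the distances $d(u,w)$ and $d(v,w)$ differ by at most~$1$. The whole statement should then fall out of a short integer inequality argument.

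First I would invoke Observation~\ref{obs:otherdef}: since $u\neq v$ and $B$ is a centroidal locating set, there exist $b_1,b_2\in B$ distinguishing $u$ from $v$. Up to swapping the roles of $u$ and $v$ and relabelling $b_1,b_2$, I may assume $d(u,b_1)\leq d(u,b_2)$ while $d(v,b_1)>d(v,b_2)$; this single \emph{WLOG} is exactly what produces the two symmetric sub-forms of item~2 and leaves item~3 invariant (the swap $(u,v,b_1,b_2)\mapsto(v,u,b_2,b_1)$ fixes the conditions of item~3 and just reverses the path $b_1-u-v-b_2$). Note $b_1\neq b_2$, since two equal vertices cannot separate $u$ from $v$. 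Writing $a_i=d(u,b_i)$ and $c_i=d(v,b_i)$, adjacency of $u,v$ gives $|a_i-c_i|\leq 1$ for $i=1,2$.

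Then I would split on whether $a_1=a_2$. If $a_1=a_2$, then $d(u,b_1)=d(u,b_2)$ while $d(v,b_1)\neq d(v,b_2)$, which is precisely item~2; the path $P$ is obtained by concatenating a shortest $b_1$--$u$ path, the edge $uv$, and a shortest $v$--$b_2$ path. If instead $a_1<a_2$, i.e. $a_2\geq a_1+1$, I would chain the adjacency inequalities: from $c_2\geq a_2-1\geq a_1$ and $c_1\leq a_1+1$, together with $c_1\geq c_2+1$ (valid since distances are integers), one obtains $a_1+1\geq c_1\geq c_2+1\geq a_1+1$, forcing every inequality to be an equality. This pins down $c_1=a_1+1$, $c_2=a_1$, and hence $a_2=a_1+1$, i.e. $d(u,b_1)+1=d(u,b_2)$ and $d(v,b_1)=d(v,b_2)+1$. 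Taking $P$ as above, its length is $a_1+1+a_1=2a_1+1$, which is odd, and the edge $uv$ sits exactly in the middle---this is item~3.

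I expect the only delicate points to be bookkeeping rather than genuine difficulty. The trichotomy above is exhaustive, so the remaining task is just to check the degenerate instance $a_1=0$: here $b_1=u$, and the equalities of item~3 force $c_2=0$, hence $b_2=v$, so both shortest paths are trivial and $P$ collapses to the edge $uv$. This is exactly item~1, showing that item~1 is the degenerate specialisation of item~3. The other place to be careful is the construction of $P$: I would emphasise that $P$ is \emph{defined} as the concatenation of the two chosen shortest paths with the edge $uv$, so that it contains a shortest $u$--$b_1$ path and a shortest $v$--$b_2$ path by construction, rather than claiming $P$ is itself a geodesic between $b_1$ and $b_2$.
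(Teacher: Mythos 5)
Your distance bookkeeping is correct and your case split is exhaustive, but there is a genuine gap at exactly the point you dismiss as ``bookkeeping rather than genuine difficulty'': you never prove that the concatenation you call $P$ is a \emph{path}. Gluing a shortest $b_1$--$u$ path, the edge $uv$, and a shortest $v$--$b_2$ path yields, a priori, only a walk; the two shortest paths may intersect, in which case $P$ has repeated vertices and is not a path $b_1-u-v-b_2$ in the sense the lemma asserts. This requirement is not cosmetic: the lemma is applied in Theorem~\ref{thm:bounded-nr-paths}, where each edge $\{u,v\}$ is charged to a pair $b_1,b_2$ of basis vertices together with a \emph{path} joining them, and the hypothesis there bounds the number of paths (not walks) between two vertices, so the whole counting argument collapses if $P$ is merely a walk. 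Establishing simplicity is where essentially all of the effort in the paper's proof goes: its Case~a analyses a common vertex $w$ of the two shortest paths, re-routes through it, and even exchanges the roles of the two basis vertices when the naive concatenation fails to be simple.

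The good news is that your setup admits a short repair, because your WLOG happens to pick the correct orientation (you attach $u$ to the basis vertex that is strictly farther from $v$). Suppose a shortest $u$--$b_1$ path $Q_1$ and a shortest $v$--$b_2$ path $Q_2$ share a vertex $w$. Since $w$ lies on these shortest paths, $d(u,w)+d(w,b_1)=d(u,b_1)$ and $d(v,w)+d(w,b_2)=d(v,b_2)$, so the triangle inequality gives
\begin{align*}
d(u,b_2)+d(v,b_1)&\leq\bigl(d(u,w)+d(w,b_2)\bigr)+\bigl(d(v,w)+d(w,b_1)\bigr)\\
&=d(u,b_1)+d(v,b_2).
\end{align*}
In your case $a_1=a_2$ this reads $a_1+c_1\leq a_1+c_2$, contradicting $c_1>c_2$; in your case $a_2=a_1+1$ it reads $2a_1+2\leq 2a_1$, again a contradiction. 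Hence $Q_1$ and $Q_2$ are vertex-disjoint (taking $w\in\{u,v\}$ also rules out $v\in Q_1$ and $u\in Q_2$), so the concatenation is a genuine path. Note that this inequality yields no contradiction if $u$ is instead attached to the $v$-\emph{closer} basis vertex, so the orientation fixed by your WLOG is load-bearing and should be flagged as such. With this paragraph added, your argument is complete, and it is in fact shorter and cleaner than the paper's case analysis.
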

\begin{proof}
If both $u,v$ belong to $B$, we are in the first case and we are done.

Otherwise, since $B$ is a centroidal locating set, without loss of
generality there are two vertices $b,b'$ of $B$ such that $d(u,b)\leq
d(u,b')$ and $d(v,b)>d(v,b')$, or vice-versa.

\vspace{0.2cm}\noindent\textbf{Case a: \boldmath{$d(u,b)=d(u,b')$}.} We show that the second case of the statement holds.
If $v$ lies on a shortest path $P_u$ from $u$ to one of $b,b'$ (say,
$b$), we are done: then no shortest path from $u$ to $b'$ can go
through $v$ (otherwise $d(v,b)=d(v,b')$). Hence, setting $b=b_2$ and
$b'=b_1$, the concatenation of $P_u$ and any shortest path from $u$ to
$b'$ is a path $P$ satisfying the desired properties.

Hence, we assume that $v$ does not lie on a shortest path from $u$ to
$b$. Since $d(v,b)\neq d(v,b')$, we can assume without loss of
generality that $d(v,b')\neq d(u,b')+1$ (moreover since $d(v,b')\leq
d(u,b')+1$, we have $d(v,b')\leq d(u,b')$). Hence a shortest path
$P_{v}$ from $v$ to $b'$ does not go through $u$. Let $P_u$ be a
shortest path from $b$ to $u$. If the concatenation $P_u-uv-P_v$ is a
path, we are done by setting $P=P_u-uv-P_v$, $b=b_1$ and
$b'=b_2$. Therefore, assume that it is not a path, and let $w$ be the
vertex closest to $u$ appearing in both $P_u$ and $P_v$: $P_u-uv-P_v$
contains a cycle going through $u,v,w$. Now, since no shortest path
from $v$ to $b'$ goes through $u$, we have $d(v,w)\leq d(u,w)$. Also,
since we assumed that $v$ does not lie on a shortest path from $u$ to
$b$, in particular $d(u,w)\leq d(v,w)$. Therefore,
$d(u,w)=d(v,w)$. This implies that $d(v,b')=d(u,b')$ (indeed, we had
$d(v,b')\leq d(u,b')$ and now $d(u,b')\leq
d(u,w)+d(w,b')=d(v,w)+d(w,b')=d(v,b')$). Hence, $d(v,b)<d(v,b')$. Let
$P'_u$ be the path obtained from the concatenation of a shortest path
from $u$ to $w$ and the subpath of $P_v$ from $w$ to $b'$, and let
$P'_v$ be a shortest path from $v$ to $b$. Then, $P'_v$ does not
contain any vertex of $P'_u$ (indeed, if there was such a vertex $t$,
then $d(u,b)\leq d(v,b)<d(v,b')=d(u,b')$, a contradiction). Hence, the
concatenation $P=P'_v-uv-P'_u$ is a path that has the desired
properties.

\vspace{0.2cm}\noindent\textbf{Case b: \boldmath{$d(u,b)<d(u,b')$}.}
We show that the third case of the statement holds.  Since
$d(v,b)>d(v,b')$ and $u,v$ are adjacent, $d(v,b)=d(u,b)+1$ and
$d(v,b')=d(u,b')-1$. Hence $d(v,b')<d(v,b)<d(v,b')+2$ and
$d(u,b)<d(u,b')<d(u,b)+2$, and $d(u,b)=d(v,b')$. We set $b_1=b$ and
$b_2=b'$. Observe that the concatenation $P$ of a shortest path from
$b_1$ to $u$, the edge $uv$, and a shortest path from $v$ to $b_2$ is
a path from $b_1$ to $b_2$ (if it were not a path, we would have
$d(u,b)=d(u,b')=d(v,b)=d(v,b')$). Since $P$ has the desired
properties, this completes the proof.
\end{proof}

In what follows, for a pair $\{u,v\}$ of vertices in a graph $G$, we
let $k_{odd}(u,v)$ and $k_{ev}(u,v)$ be the number of odd and even
(not necessarily disjoint) paths connecting $u$ to $v$, respectively.

\begin{theorem}\label{thm:bounded-nr-paths}
Let $G$ be a graph on $n$ vertices and $m$ edges such that for every
pair $\{u,v\}$ of vertices, $2k_{ev}(u,v)+k_{odd}(u,v)\leq k$ for some
integer $k$. Then, $\M(G)>\sqrt{\frac{2m}{k}}$.  In particular, for every
  tree $T$, $\M(T)>\sqrt{n-1}$.
\end{theorem}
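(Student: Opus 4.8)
The plan is to use Lemma~\ref{lemma:LBcycle-path} to charge each edge of $G$ to a path whose two endpoints lie in a fixed centroidal basis $B$, with bounded multiplicity, and then to bound the number of such paths via the hypothesis.

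Fix a centroidal basis $B$ with $|B|=\M(G)=:\kappa$. For every edge $e=\{u,v\}$, Lemma~\ref{lemma:LBcycle-path} produces two distinct vertices $b_1,b_2\in B$ and a path $P_e$ from $b_1$ to $b_2$ running through $e$. The central claim I would establish is that in each of the three cases of the lemma, $e$ sits in the \emph{middle} of $P_e$: writing $a=d(u,b_1)$, the vertex $u$ occupies position $a$ and $v$ position $a+1$ along $P_e$ read from the $b_1$-end, and the length of $P_e$ equals $a+1+d(v,b_2)$ with $d(v,b_2)\in\{a-1,a,a+1\}$. Consequently $e$ is the unique central edge when $P_e$ has odd length, and is one of the (at most) two central edges incident to the central vertex when $P_e$ has even length. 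Cases~1 and~3 give this directly (they are explicitly middle edges of odd paths), so the delicate point — and the step I expect to require the most care — is Case~2, where the parity of $P_e$ may be either even or odd and one must check all three possible values of $d(v,b_2)$ to confirm that $e$ straddles the center in every subcase.

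With this claim in hand the counting is routine. I would group the edges according to the path $P_e$ assigned to them. Since $e$ is always a central edge of $P_e$, and an odd path has a single central edge while an even path has exactly two, each odd path between two vertices of $B$ is charged by at most one edge and each even path by at most two. Summing over unordered pairs $\{b_1,b_2\}\subseteq B$ and writing $k_{odd}(b_1,b_2)$ and $k_{ev}(b_1,b_2)$ for the numbers of odd and even paths between $b_1$ and $b_2$, this yields
\begin{equation*}
m \le \sum_{\{b_1,b_2\}\subseteq B}\bigl(k_{odd}(b_1,b_2)+2k_{ev}(b_1,b_2)\bigr) \le \binom{\kappa}{2}k,
\end{equation*}
where the last inequality uses the hypothesis $2k_{ev}(b_1,b_2)+k_{odd}(b_1,b_2)\le k$.

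Finally, from $m\le\tfrac{\kappa(\kappa-1)}{2}k<\tfrac{\kappa^2}{2}k$ I obtain $\kappa>\sqrt{2m/k}$, which is the desired bound $\M(G)>\sqrt{2m/k}$. The statement for trees follows by specialization: in a tree every pair of vertices is joined by a unique path, so $k_{odd}(u,v)+k_{ev}(u,v)=1$ for all pairs and hence $2k_{ev}(u,v)+k_{odd}(u,v)\le 2$; taking $k=2$ and $m=n-1$ gives $\M(T)>\sqrt{n-1}$.
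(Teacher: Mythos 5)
Your proposal is correct and follows essentially the same route as the paper's own proof: both charge each edge, via Lemma~\ref{lemma:LBcycle-path}, to a triple $(b_1,b_2,P)$ with $b_1,b_2\in B$, observe that the charged edge must be the middle edge (odd length) or one of the two edges at the middle vertex (even length) of $P$, and then sum $2k_{ev}+k_{odd}\le k$ over the $\binom{|B|}{2}$ pairs to get $m<\tfrac{k}{2}\M(G)^2$. Your unified ``central edge'' formulation, including the case analysis of $d(v,b_2)\in\{a-1,a,a+1\}$ in Case~2, is exactly the paper's type-by-type argument in compressed form, and the tree specialization is identical.
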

\begin{proof}
Let $B$ be a centroidal locating set of $G$. To each pair $u,v$ of
adjacent vertices in $G$, we assign a triple $(b_1,b_2,P)=T(u,v)$ of
two vertices $b_1,b_2$ of $B$ and a path $P:b_1-u-v-b_2$ satisfying
one of the three properties described in
Lemma~\ref{lemma:LBcycle-path}. The assignment is done as follows:
\begin{enumerate}
\item if both $u,v$  belong to $B$, set $T(u,v)=(u,v,uv)$ (we say that
  the pair $u,v$ is of \emph{type~1});
\item otherwise, if there is some pair $\{b_1,b_2\}$ of $B$ and the
  corresponding path $P$ such that the second property of
  Lemma~\ref{lemma:LBcycle-path} holds, then set $T(u,v)=(b_1,b_2,P)$
  (we say that the pair $u,v$ is of \emph{type~2});
\item otherwise, let $T(u,v)$ consist of an arbitrary pair $b_1,b_2$
  of vertices of $B$ and the corresponding path $P$ satisfying the
  third property of Lemma~\ref{lemma:LBcycle-path} (we say that the
  pair $u,v$ is of \emph{type~3}).
\end{enumerate}

Now, for a given pair $b_1,b_2$ of $B$ and a path $P$ from $b_1$ to
$b_2$, we will upper-bound the number of pairs of adjacent vertices
$u,v$ such that $T(u,v)=(b_1,b_2,P)$.

Assume first that $P$ has odd length. If $P$ is the edge $b_1,b_2$,
there is only the pair $\{u,v\}=\{b_1,b_2\}$ of type~1 with
$T(u,v)=(b_1,b_2,P)$. Otherwise, assume $\{u,v\}$ is a pair of type~2
with $T(u,v)=(b_1,b_2,P)$. Then by Lemma~\ref{lemma:LBcycle-path},
$d(u,b_1)=d(u,b_2)=\ell_1$ and the subpath of $P$ from $u$ to $b_1$
has length $\ell_1$. Since the subpath from $v$ to $b_2$ is also a
shortest path and $d(u,b_2)=\ell_1$, it has length $\ell_2$ with
$\ell_1-1\leq\ell_2\leq\ell_1+1$. The length of $P$ is
$\ell_1+\ell_2+1$. Since it is an odd number, $\ell_2=\ell_1$ and
$\{u,v\}$ is the middle edge of $P$. If $\{u,v\}$ is of type~3, by
Lemma~\ref{lemma:LBcycle-path}, $\{u,v\}$ is also the middle edge of
$P$. Hence in total there is at most one pair $u,v$ with
$T(u,v)=(b_1,b_2,P)$.

Now, assume that $P$ has even length. Let $\{u,v\}$ be a pair with
$T(u,v)=(b_1,b_2,P)$. Then, by Lemma~\ref{lemma:LBcycle-path}, $u,v$
cannot be of type~1 or type~3, so it must be of type~2. By the same
arguments as in the previous paragraph, the length of the subpath of
$P$ from $b_1$ to $u$ is $\ell_1$, and the length of the subpath of
$P$ from $v$ to $b_2$ is either $\ell_1-1$ or $\ell_1+1$. In both
cases, one of $u,v$ is the middle vertex of $P$. Hence there can be at
most two such pairs.

To summarize, we proved that for each pair $b_1,b_2$ of $B$, the
number of pairs $u,v$ with $T(u,v)=(b_1,b_2,P)$ for some path $P$ is
at most $$2k_{ev}(b_1,b_2)+k_{odd}(b_1,b_2)\leq k.$$

Since in total, there are $m$ pairs of adjacent vertices in $G$ and
each such pair is associated to exactly one of the $\binom{|B|}{2}$
pairs of $B$, we have $m\leq k\binom{|B|}{2}<\frac{k|B|^2}{2}$, and
the bound follows.

When $G$ is a tree, there are $n-1$ edges, and there is a unique path
between any pair of vertices, hence $k\leq 2$.
\end{proof}

We now show that the bound of Theorem~\ref{thm:bounded-nr-paths} is
tight up to a constant factor for paths and cycles.

\begin{theorem}\label{thm:paths-cycles}
Let $n\geq 3$.\\
(1) If $n$ is even,
$\frac{\sqrt{2}}{2}\sqrt{n}<\M(C_n)$. If $n$ is odd,
$\frac{\sqrt{6}}{3}\sqrt{n}<\M(C_n)$. In both cases,
$\M(C_n)<\frac{7\sqrt{2n}}{2}+1$. If $n=2\ell^2$ for some
integer $\ell$, then $\M(C_n)\leq\sqrt{2n}-2$.\\
(2) $\sqrt{n-1}<\M(P_n)<6\sqrt{n-1}+3$. If $n=(2\ell)^2+1$ for some
integer $\ell$, then $\M(P_n)\leq 2\sqrt{n-1}-3$.
\end{theorem}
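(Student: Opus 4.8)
The plan is to prove the lower and upper bounds for $\M(C_n)$ and $\M(P_n)$ separately. The lower bounds come essentially for free from the preceding machinery: for a cycle $C_n$, every pair of vertices is connected by exactly two paths, one going clockwise and one counterclockwise, whose lengths sum to $n$; thus when $n$ is even both paths have the same parity, while when $n$ is odd they have opposite parities. Feeding this into Theorem~\ref{thm:bounded-nr-paths} with the refined quantity $2k_{ev}(u,v)+k_{odd}(u,v)$ yields $k=4$ in the even case (two even paths, so $2\cdot 2=4$) and $k=3$ in the odd case (one even and one odd path), giving $\M(C_n)>\sqrt{2n/4}=\frac{\sqrt 2}{2}\sqrt n$ and $\M(C_n)>\sqrt{2n/3}=\frac{\sqrt 6}{3}\sqrt n$ respectively. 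For the path, $k=2$ as noted in Theorem~\ref{thm:bounded-nr-paths} (unique path, of either parity, contributing at most~$2$), so $\M(P_n)>\sqrt{(n-1)}$ directly since $m=n-1$. These lower bounds are therefore almost immediate once the path-counting is set up correctly.

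The heart of the work is the upper bounds, which are constructive: I must exhibit a centroidal locating set of the claimed size. The idea is to place basis vertices around the path or cycle with \emph{increasing gaps}, so that the ordered-partition signature $r(x)$ of each non-basis vertex is distinguished by which gap it lies in together with its position within that gap. Concretely, for the path I would choose basis vertices $w_0,w_1,w_2,\dots$ with consecutive gaps roughly $1,1,2,2,3,3,\dots$ (or a similar arithmetically growing pattern), so that after $\Theta(\sqrt{n})$ basis vertices the total length covered is $\Theta(n)$. The key verification is that two distinct vertices $x\neq y$ always receive distinct rank orderings: a vertex inside a gap is first located by the two basis vertices bounding that gap, and the \emph{order} in which those two report (who is closer) plus the relative order of the next-nearest basis vertices pins down the position. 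Because the gaps have distinct lengths, no two vertices in different gaps can share a signature, and within a single gap the closest-basis-vertex tie-breaking distinguishes the (at most gap-length) internal vertices. The cleanest argument uses Lemma~\ref{lemma:deg2}-style reasoning only partially; mostly it is a direct check of Observation~\ref{obs:otherdef}, finding for each pair $x,y$ a witnessing pair $b_1,b_2\in B$. For the cycle the construction is analogous but must account for the wrap-around and the two directions; the refined bound $\M(C_n)\le\sqrt{2n}-2$ when $n=2\ell^2$ comes from an optimized gap sequence where the gaps are exactly $1,2,\dots,\ell$ in each of the two arcs.

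The main obstacle, and where the bookkeeping is delicate, is verifying that the signatures genuinely separate \emph{all} pairs, including vertices that are equidistant from several basis points and vertices near the endpoints (for paths) or near the ``seam'' where the gap pattern wraps (for cycles). The subtle failure mode is two vertices in adjacent gaps of nearly equal length producing the same ordered partition; this is exactly why the gaps must be strictly increasing rather than constant, and why one needs $\Theta(\sqrt n)$ rather than $O(1)$ basis vertices despite the small metric dimension. I would handle this by an induction or a direct case analysis on the gap index $i$: a vertex at offset $j$ within the $i$-th gap has $w_{i-1}$ and $w_i$ as its two nearest basis vertices, and the triple (identity of nearest basis vertex, identity of second-nearest, whether they tie) already encodes $i$ and $j$ uniquely once gaps are distinct. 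The upper constants ($6\sqrt{n-1}+3$ for paths, $\frac{7\sqrt{2n}}{2}+1$ for cycles) are generous, so I would not optimize them; I would instead pick a clean gap schedule, bound the number of basis vertices by the largest index $t$ with $\sum_{i\le t} g_i \le n$, and solve $\sum_{i\le t} i \approx t^2/2 \le n$ to get $t\approx\sqrt{2n}$, tightening to the stated constants in the special cases $n=2\ell^2$ and $n=(2\ell)^2+1$ where the arithmetic is exact.
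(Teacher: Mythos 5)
Your lower bounds are correct and coincide with the paper's own argument: both feed the parity count of the one or two $u$--$v$ paths into Theorem~\ref{thm:bounded-nr-paths} ($2k_{ev}+k_{odd}\le 4$ for even cycles, $\le 3$ for odd cycles, $\le 2$ for paths). The genuine gap is in the upper bounds, and it is not mere bookkeeping: your verification mechanism fails. The claim that the triple (identity of nearest basis vertex, identity of second-nearest, whether they tie) encodes the position is false as soon as a gap has length at least~$5$: the vertices at offsets $1$ and $2$ from the same endpoint of such a gap have the same nearest basis vertex, the same second-nearest one, and no tie between them, so this local data cannot separate them --- and your schedule necessarily produces gaps of length $\Theta(\sqrt n)$. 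What actually decides separation on a path is a global condition: identifying vertices with integer positions, a pair $\{b_1,b_2\}\subseteq B$ separates positions $a<b$ if and only if $2a\le b_1+b_2\le 2b$, so $B$ is a centroidal locating set of $P_n$ if and only if the pairwise sumset of $B$ meets every window $\{2a,2a+1,2a+2\}$. The witnessing pair is typically far from both vertices: with your schedule $1,1,2,2,3,3,\dots$ (basis $=$ squares and pronic numbers, $B=\{i^2\}\cup\{i(i+1)\}$), positions $26$ and $27$ lie between basis vertices $25$ and $30$ and are separated only by distant pairs such as $\{16,36\}$. Worse, for this schedule the covering question reduces to sum-of-two-squares representability (e.g.\ $2a=i^2+j(j+1)$ for some $i,j$ iff $8a+1$ is a sum of two squares), and since sums of two squares have density zero, there are infinitely many windows missed entirely; so the construction itself is wrong for infinitely many $n$, not just insufficiently verified. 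A smaller but telling error: on the cycle, gaps $1,2,\dots,\ell$ on each of two arcs give $n=\ell(\ell+1)\ne 2\ell^2$ and $2\ell$ basis vertices, not the claimed $\sqrt{2n}-2=2\ell-2$.

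The paper's construction is structurally different precisely so that verification is local and explicit. For $n=p(2q+2)$ it takes $B_1$ consisting of $p$ equally spaced vertices (one per segment of length $2q+2$) together with $B_0$ consisting of every second vertex of the first segment: a coarse arithmetic progression plus one dense block, which in the sumset language above is the classical additive-basis-of-order-two construction and does cover all windows. Separation within a segment is then certified by explicit ties, $d(s_i^j,b_0^j)=d(s_i^j,b_1^{2i})$, which single out $s_i^j$ among the vertices of its half-segment, and the equally spaced set $B_1$ handles separation across segments. If you want to repair your write-up, replace the increasing-gap schedule by this two-scale set (or prove the sumset-covering property for some concrete schedule); an induction on ``nearest/second-nearest'' data cannot work, because for most pairs the only separating basis pairs are far from both vertices.
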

\begin{proof}\textbf{Lower bounds.} The bounds follow from Theorem~\ref{thm:bounded-nr-paths} since paths are trees, and by observing that cycles have $n$ edges. For even cycles, for any pair $\{u,v\}$ of vertices, $k_{odd}(u,v)\leq 2$ and $k_{ev}(u,v)\leq 2$ and $k_{odd}(u,v)+k_{ev}(u,v)=2$, hence $2k_{ev}(u,v)+k_{odd}(u,v)\leq 4$. For odd cycles, $k_{odd}(u,v)=1$ and $k_{ev}(u,v)=1$, hence $2k_{ev}(u,v)+k_{odd}(u,v)\leq 3$.

\vspace{0.5cm}\noindent \textbf{Upper bounds for cycles.}  We first prove that
for any $p,q\geq 2$, if $n=p(2q+2)$, then:

\begin{align}
\M(C_n)\leq p+q-1.\label{eq:Cn}
\end{align}

Assuming that $n=2\ell^2$, and setting $p=\ell$ and $q=\ell-1$,
Inequality~\eqref{eq:Cn} yields the claimed bound $\M(C_n)\leq
2\ell-2=\sqrt{2n}-2$.

Let $\{x_0,\ldots,x_{n-1}\}$ be the vertex set of $C_n$. Let us divide
$C_n$ into $p$ portions of $2q+2$ consecutive vertices each: for
$0\leq i\leq p-1$,
$R_i=\{x_{i(2q+2)},x_{i(2q+2)+1},\ldots,x_{(i+1)(2q+2)-1}\}$. For each
$i$, we further define two subsets of $R_i$ as follows:
$S_i=\{s_i^1,\ldots,s_i^{q}\}$ and $T_i=\{t_i^1,\ldots,t_i^{q}\}$,
where for $1\leq j\leq q$, $s_i^j=x_{i(2q+2)+j}$ and
$t_i^j=x_{i(2q+2)+q+1+j}$. In other words, $S_i$ contains $q$ vertices
from $R_i$, starting from the second one, and $T_i$ contains the $q$
last vertices of $R_i$. Observe that the first and the $(q+2)$-nd
vertices from $R_i$ neither belong to $S_i$, nor $T_i$.

We now define a set $B=B_0\cup B_1$, which we claim, will be our
centroidal locating set. We let $B_0=\{b_0^0,\ldots,b_0^{q-1}\}$,
where for $0\leq i\leq q-1$, $b_0^i=x_{2(i+1)}$ (that is, $B_0$ contains
each second vertex of $R_0$). We let $B_1=\{b_1^0,\ldots,b_1^{p-1}\}$,
where for $0\leq i\leq p-1$, $b_1^i=x_{i(2q+2)}$ (that is, $B_1$ contains
the first vertex of each set $R_i$). 

An illustration of sets $S_i,T_i,B_0,B_1$ is given in
Figure~\ref{fig:cycleUB}.

\begin{figure}[htpb!]
\centering
\includegraphics[width=0.9\textwidth]{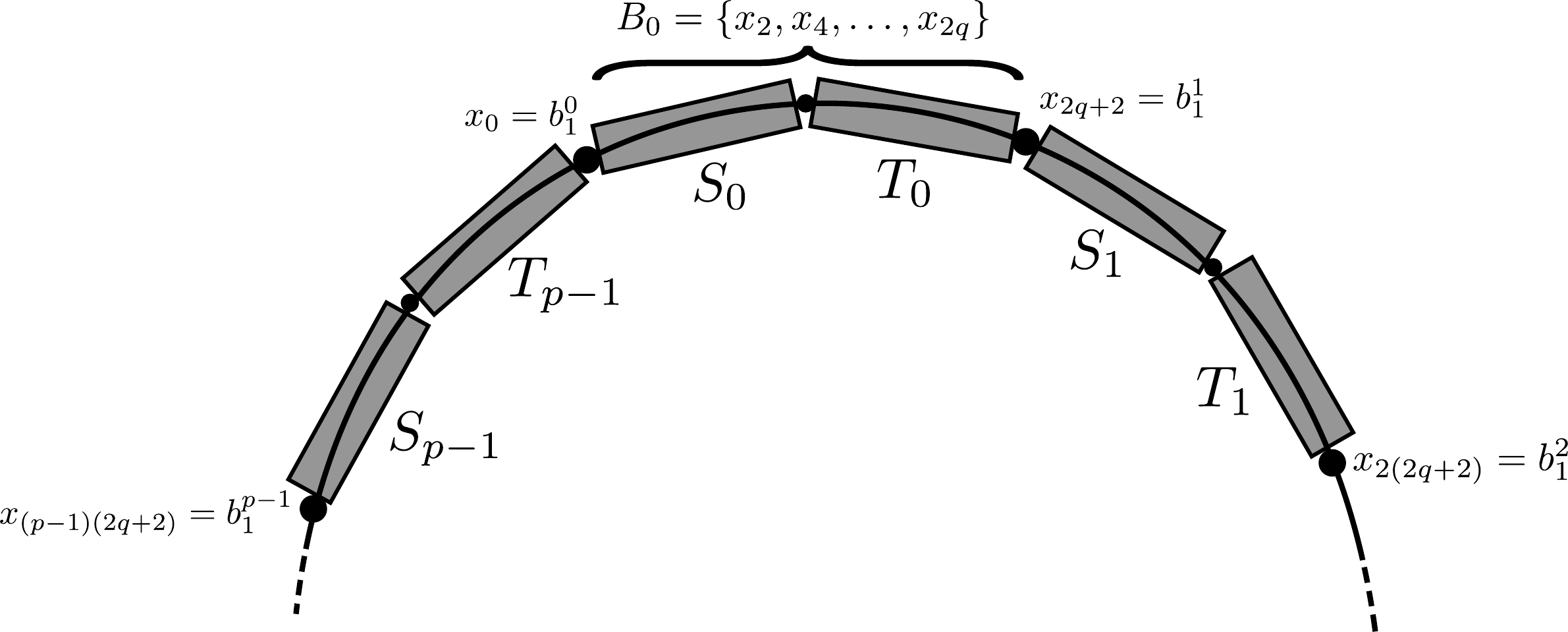}
\caption{Illustration of the sets defined in the proof of Theorem~\ref{thm:paths-cycles}.}
\label{fig:cycleUB}
\end{figure}

Observe that $B$ has $p+q-1$ elements. It remains to show that $B$ is
a centroidal locating set.

First of all, notice that for any $0\leq i\leq p-1$, $b_1^i\in B_1$ is
the unique vertex that is first located by itself, and later by
$\{b_1^{(i-1)\bmod p},b_1^{(i+1)\bmod p}\}$ at the same time. For
$i\neq 0$, the $(q+2)$-nd vertex of $R_i$, $x_{i(2q+2)+q+1}$, is the
only one that is located \emph{first} by $\{b_1^{i},b_1^{(i+1)\bmod
  p}\}$ at the same time. Similarly, $x_{q+1}$ is the unique vertex
first located by the vertices of $B_0$ (in some order), and then by
$\{b_1^0,b_1^1\}$ at the same time. If $x\in S_0\cup T_0$, if $x\in
B_0$, $x$ is the only vertex located first by itself only; if $x\notin
B_0$, $x$ is the only vertex located first by its two
neighbours. Hence, all the previously considered vertices are
distinguished from any other vertex in $C_n$.

Now, let $u,v$ be a pair of vertices not yet proved to be
distinguished.  If $u\in S_i, v\in T_i$ ($1\leq i\leq p-1$), then
$d(u,b_1^i)<d(u,b_1^{(i+1)\bmod p})$ but
$d(v,b_1^i)>d(v,b_1^{(i+1)\bmod p})$. If $u\in S_i, v\in S_{i'}$
($1\leq i,i'\leq p-1$), then $d(u,b_1^i)<d(u,b_1^{i'})$ but
$d(v,b_1^i)>d(v,b_1^{i'})$. The case where $u\in T_i,v\in T_{i'}$ is
symmetric. If $u\in S_i, v\in T_{i'}$ ($1\leq i,i'\leq p-1$), if
$i=(i'+1)\bmod p$, then $d(u,b_1^{(i+1)\bmod p})<d(u,b_1^{i'})$ while
$d(v,b_1^{(i+1)\bmod p})>d(v,b_1^{i'})$. Otherwise,
$d(u,b_1^{i})<d(u,b_1^{(i'+1)\bmod p})$ while
$d(v,b_1^{i})>d(v,b_1^{(i'+1)\bmod p})$.

It remains to prove that for $1\leq i\leq p-1$, any two vertices from
$S_i$ are distinguished (the case of two vertices of $T_i$ would
follow by symmetry). To see this, let $s_i^j\in S_i$. If
$i\leq\left\lceil\frac{p-1}{2}\right\rceil$, $d(s_i^j,
b_1^{2i})=d(s_i^j,b_0^j)=(2q+2)i-j$, and no other vertex of $S_i$ has
this property. Similarly, if $i>\left\lceil\frac{p-1}{2}\right\rceil$,
$d(s_i^j, b_1^{2i-p})=d(s_i^j,b_0^j)=(2q+2)(p-i)+j$. This completes
the proof of validity of $B$.

\vspace{0.5cm} In order to prove the bound for all cycles, if $n$ is
not of the form $2\ell^2$, let $m=2\ell^2$ be the integer of this form
that is closest to $n$ and such that $m\leq n$: $n=m+k$ for some $k$,
and $n<2(\ell+1)^2$.  A construction similar to the previous one can
be done. Letting $p=\ell$, $q=\ell-1$, we construct
$B_1=\{b_1^0,\ldots,b_1^{p-1}\}$ with $b_1^i=x_{i(2q+2)}$ as
previously; however this time $B_1$ does not include any vertex from
$\{x_m,\ldots,x_{n-1}\}$. Instead, we let
$B_0=\{x_1,\ldots,x_{2q+1}\}\cup\{x_m,\ldots,x_{n-1}\}$: $B_0$
contains the first $2q+2$ vertices (except $x_0$), together with the
$k$ last vertices. It is clear that the construction works the same
way than in the previous proof --- we omit the details as a formal
proof would be tedious.\footnote{In fact we have not taken care of
  optimizing the construction, as here all the vertices of $B_0$ are
  probably not needed.} In total, $B=B_0\cup B_1$ has size at most
$p+2q+1+k=3\ell-1+k$. Since $2(\ell+1)^2-2\ell^2=4\ell+2$, $k<4\ell+2$
and $|B|\leq 7\ell+1$. Furthermore $2\ell^2<n$, and
$\ell<\frac{\sqrt{2n}}{2}$. Hence $|B|<\frac{7\sqrt{2n}}{2}+1$,
proving the bound.

\vspace{0.5cm}\noindent\textbf{Upper bounds for paths.}  A similar
construction than the one for cycles can be done for the case of
paths: for any $p,q\geq 2$, if $n=p(2q+2)+1$, then $\M(P_n)\leq
p+2q-1$. The idea of the construction is again to divide the vertex
set $\{x_0,\ldots,x_{n-1}\}$ into $p$ portions of size $2q+2$ each
(vertex $x_{n-1}$ does not belong to any such portion). For $0\leq
i\leq p$, $b_1^i=x_{i(2q+2)}$, and $B_1$ contains all vertices of the
form $b_1^i$. $B_0$ is defined in the same way as for our construction
for cycles, but now we also consider a set $B'_0$ similar to $B_0$,
but on the other end of the path. We have $|B_0\cup B'_0\cup
B_1|=p+1+2(q-1)=p+2q-1$, and similar arguments than for the
construction for cycles show that $B_0\cup B'_0\cup B_1$ is a
centroidal locating set.

Hence, assuming $n=(2\ell)^2+1$ for some $\ell\geq 2$ and setting
$p=2\ell$ and $q=\ell-1$, we get that $\M(P_n)\leq
4\ell-3=2\sqrt{n-1}-3$.

For the general bound, once again we do not optimize the
constant. Assume that $n$ is not of the form $(2\ell)^2+1$, and
let $m=(2\ell)^2+1$ be the integer of this form that is closest to $n$
and $m\leq n$: we have $n=m+k$ for some $k$, and
$n<(2(\ell+1))^2+1$. Let $p=2\ell$ and $q=\ell-1$. Now, $B_1$ is
selected as before among the first $m$ vertices; $B_1$ has $p$
elements. $B_0$ contains the first $2q+2$ vertices (except $x_0$), and
$B'_0$ contains the last $k+2q+3$ vertices (except $x_m$). In total,
$B=B_0\cup B'_0\cup B_1$ has $p+(2q+1)+(k+2q+2)=p+4q+3+k=6\ell-1+k$
vertices. Since $(2(\ell+1))^2+1-(2\ell)^2-1=6\ell+4$,
$k<6\ell+4$. This implies $|B|<12\ell+3$. Since $n>(2\ell)^2+1$,
$\ell<\frac{\sqrt{n-1}}{2}$; hence, $|B|<6\sqrt{n-1}+3$.
\end{proof}

\section{Complexity results}\label{sec:complex}

Let us now turn our attention to the computational complexity of
finding a small centroidal locating set, that is, the computational
complexity of the following problem:\\

\optpb{\textsc{Centroidal Dimension}}{A graph $G$.}{Find a
  centroidal basis of $G$.}

We have seen in Theorem~\ref{thm:bound-diam2} that for any graph of
diameter~2, $\LD(G)-1\leq\M(G)\leq 2\LD(G)$. We get the following
corollary, showing that \textsc{Centroidal Dimension} is
computationally very hard, even from the approximation point of view
(recall that an $\alpha$-approximation algorithm for problem $P$ is a
polynomial-time algorithm for $P$ which always outputs a solution of
size no greater than $\alpha$ times the size of an optimal solution).

\begin{corollary}\label{cor:complexity}
\textsc{Centroidal Dimension} is \textsf{NP}-hard to
approximate within any factor $o(\ln n)$ for graphs on $n$ vertices
(even for graphs with a vertex adjacent to all other
vertices, and hence diameter~2-graphs). For graphs of
diameter~2, it has an $O(\ln n)$-approximation algorithm.
\end{corollary}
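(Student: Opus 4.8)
The plan is to leverage the sandwich bound $\LD(G)-1 \le \M(G) \le 2\LD(G)$ from Theorem~\ref{thm:bound-diam2}, which holds for diameter-2 graphs, and to transfer known inapproximability and approximability results for the location-domination number to $\M$. The key observation is that the two problems \textsc{Location-Domination} and \textsc{Centroidal Dimension} have optimal values within a constant factor of each other on diameter-2 graphs, so an efficient approximation for one yields an efficient approximation for the other up to a constant, and likewise a hardness-of-approximation result for one transfers to the other (constant factors being absorbed into the $o(\ln n)$ and $O(\ln n)$ thresholds).

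For the hardness direction, I would invoke the known result that computing $\LD(G)$ is \textsf{NP}-hard to approximate within a factor of $o(\ln n)$, and moreover that this hardness holds for a class of instances that can be forced to have diameter~2 (e.g.\ by adding a universal vertex, or by working with split/bipartite-type instances where an apex vertex can be attached). The first step is to produce, from an arbitrary instance, a diameter-2 instance—concretely, a graph with a vertex adjacent to all others—on which $\LD$ remains hard to approximate; the corollary's parenthetical remark signals exactly this reduction target. On such a graph $G$ with $n$ vertices, suppose we had a polynomial-time $o(\ln n)$-approximation for $\M$. Then given an optimal centroidal basis of size $\M(G)$, the bound $\M(G) \le 2\LD(G)$ combined with $\LD(G) \le \M(G)+1$ lets us read off an $o(\ln n)$-approximate value for $\LD(G)$ (the constant factor~$2$ and additive~$1$ are swallowed by the asymptotic slack), contradicting the \textsf{NP}-hardness of approximating $\LD$. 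The main obstacle here is ensuring that the hard instances for $\LD$-approximation can indeed be taken to have diameter~2 without destroying the gap; this requires either citing the precise form of the existing $\LD$ hardness reduction (which typically comes from \textsc{Set Cover} or \textsc{Dominating Set} and already produces low-diameter instances) or augmenting instances with a dominating apex vertex and checking that the $\LD$ optimum changes by at most a constant factor.

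For the positive direction, the approximation algorithm, I would simply run a known $O(\ln n)$-approximation algorithm for \textsc{Location-Domination} (which exists via its set-cover formulation), obtaining a locating-dominating set $C$ with $|C| = O(\ln n)\cdot\LD(G)$. The construction in Lemma~\ref{lemm:LDset} then converts $C$ into a centroidal locating set $C'$ with $|C'| \le 2|C| \le 2\LD(G)\cdot O(\ln n)$; since $\M(G) \ge \LD(G)-1$, the output $C'$ is within an $O(\ln n)$ factor of the centroidal optimum. I must verify that the transformation of Lemma~\ref{lemm:LDset} is polynomial-time computable, which it plainly is since it only adds the offending vertices $v$ one at a time.

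Overall, the proof is essentially a two-line corollary once the diameter-2 sandwich inequality is in hand; the only genuinely delicate point is pinning down the diameter-2 (equivalently, universal-vertex) form of the $\LD$-inapproximability result, and I would resolve it by pointing to the standard \textsc{Set Cover}-based hardness for $\LD$, whose gadget graphs have small diameter and can be made diameter~2 by a single apex vertex.
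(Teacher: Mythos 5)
Your proposal is correct and follows essentially the same route as the paper: both directions are obtained by transferring solutions constructively through the diameter-2 sandwich $\LD(G)-1\leq\M(G)\leq 2\LD(G)$ of Theorem~\ref{thm:bound-diam2}, with the known $O(\ln n)$-approximation for locating-dominating sets yielding the algorithm and an $o(\ln n)$-inapproximability result for $\LD$ on graphs with a universal vertex yielding the hardness. The paper sources that hardness ingredient by direct citation (a reduction in the first author's thesis combined with a lemma of Gravier, Klasing and Moncel), whereas your apex-augmentation fallback is a valid alternative, since adding a universal vertex changes $\LD$ by at most one and solutions convert back constructively.
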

\begin{proof}
Since $\LD(G)-1\leq\M(G)\leq 2\LD(G)$ and the bounds are constructive,
any $\alpha$-approximation algorithm ($\alpha\geq 1$) for
\textsc{Minimum Locating-Dominating Set} can be transformed into an
approximation algorithm of factor
$2\alpha(1+\frac{1}{OPT})=2\alpha(1+o(1))$ for \textsc{Centroidal
  Dimension} for graphs of diameter~2, and vice-versa. Indeed, given an
$\alpha$-approximate locating-dominating set $D$ of $G$, we construct
a centroidal locating set of $G$ of size at most $2|D|$. We have
$2|D|\leq 2\alpha\LD(G)\leq 2\alpha
(\M(G)+1)=(2\alpha+\frac{2\alpha}{\M(G)})\M(G)$. For the converse, the
reasoning is similar.

This also implies that if \textsc{Minimum Locating-Dominating Set} is
\textsf{NP}-hard to $\alpha$-approximate for graphs of diameter~2 for
some $\alpha\geq 2$, then \textsc{Centroidal Dimension} is
\textsf{NP}-hard to approximate within factor
$\left(\frac{\alpha}{2}\frac{OPT}{OPT+1}\right)=\frac{\alpha(1-o(1))}{2}$
for graphs of diameter~2.

The positive approximation bound follows, as \textsc{Minimum
  Locating-Dominating Set} is well-known to be
$O(\ln n)$-approximable, see for example Gravier, Klasing and
Moncel~\cite{GKM08}.

Moreover, it follows from a reduction for \textsc{Minimum Identifying
  Code} in the first author's thesis~\cite[Section 6.4]{F} and a lemma
from Gravier, Klasing and Moncel~\cite{GKM08} (see also
Foucaud~\cite{F13a,F13}) that \textsc{Minimum Locating-Dominating Set}
is \textsf{NP}-hard to approximate within a factor of $o(\ln n)$ for
graphs having a vertex adjacent to all other vertices. This proves the
non-approximability bound.
\end{proof}

Note that Corollary~\ref{cor:complexity} fully determines the
computational complexity of \textsc{Centroidal Dimension} in
graphs of diameter~2 from the approximation point of view. It was
recently proved by Hartung and Nichterlein that the related problem
\textsc{Metric Dimension} remains \textsf{NP}-hard to
approximate within a factor of $o(\ln n)$ even for subcubic
graphs~\cite{HN}. In general, it would be interesting to extend the
result of Corollary~\ref{cor:complexity} to other families of graphs.

As it is often the case with domination or identification problems in
graphs, a good way of reformulating our problem is to represent it as
an instance of \textsc{Minimum Set Cover}, which is well-known to be
$(\ln n+1)$-approximable (see Johnson~\cite{J74}):\\

\optpb{\textsc{Minimum Set Cover}}{A hypergraph $H=(V,E)$.}{Find a minimum-size subset $C\subseteq E$
  such that $\bigcup_{X\in C}X=V$.}

For example, \textsc{Metric Dimension} for a graph $G$ can be
expressed in this way by constructing a hypergraph $H_{MD}(G)$ on
vertex set $\binom{V(G)}{2}$ with a hyperedge $E_v$ for each vertex
$v\in V(G)$ containing all pairs $\{x,y\}$ of vertices with
$d(x,v)\neq d(y,v)$. Then $H_{MD}(G)$ has a set cover of size $k$ if
and only if $G$ has a locating set of size $k$, as shown by Khuller,
Raghavachari and Rosenfeld~\cite{KRR96}. Hence \textsc{Metric
  Dimension} is $O(\ln n)$-approximable.

Next, we give a similar reduction for \textsc{Centroidal Dimension},
but with a weaker approximation ratio.

\begin{theorem}\label{thm:sqrtn-approx}
\textsc{Centroidal Dimension} is
$O\left(\sqrt{n\ln n}\right)$-approximable for graphs on $n$
vertices.
\end{theorem}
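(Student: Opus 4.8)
The plan is to reformulate the problem as an instance of \textsc{Minimum Set Cover}, mimicking the reduction sketched above for \textsc{Metric Dimension}, but accounting for the fact that centroidal detection distinguishes a pair $\{x,y\}$ only through an \emph{unordered pair} of solution vertices rather than through a single vertex. Concretely, I would build a hypergraph $H(G)$ whose vertex set (the universe to be covered) is $\binom{V(G)}{2}$, the set of all pairs $\{x,y\}$ of distinct vertices. For every unordered pair $\{b_1,b_2\}$ of vertices of $G$ I introduce one hyperedge $E_{\{b_1,b_2\}}$ consisting of exactly those pairs $\{x,y\}$ that $\{b_1,b_2\}$ distinguishes, i.e.\ those for which the relative order of $b_1,b_2$ (smaller, equal, or larger distance) differs between $x$ and $y$. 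By Observation~\ref{obs:otherdef}, a vertex set $B$ is a centroidal locating set if and only if the hyperedges $\{E_{\{b_1,b_2\}} : b_1,b_2\in B\}$ cover the universe $\binom{V(G)}{2}$. All distances, and hence $H(G)$, can be computed in polynomial time, so this is a genuine polynomial reduction.

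The key quantitative step is to translate the cardinality of a centroidal basis into a number of hyperedges. If $B^\star$ is an optimal centroidal basis with $k^\star=\M(G)=|B^\star|$, then the $\binom{k^\star}{2}$ hyperedges coming from the pairs inside $B^\star$ already cover the universe, so the optimum of the \textsc{Set Cover} instance is at most $\binom{k^\star}{2}$. Running the greedy algorithm, which is $(1+\ln|U|)$-approximate with $|U|=\binom{n}{2}<n^2$, therefore yields a cover $\mathcal C$ with $|\mathcal C|\le\left(1+\ln\binom{n}{2}\right)\binom{k^\star}{2}=O\!\left((k^\star)^2\ln n\right)$ hyperedges. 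Taking $B=\bigcup_{\{b_1,b_2\}\in\mathcal C}\{b_1,b_2\}$, every covered pair $\{x,y\}$ is distinguished by some chosen pair whose two endpoints lie in $B$, so by Observation~\ref{obs:otherdef} the set $B$ is a centroidal locating set, and $|B|\le 2|\mathcal C|=O\!\left((k^\star)^2\ln n\right)$.

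The main obstacle is precisely this quadratic blow-up: because each ``set'' is a \emph{pair} of solution vertices, a solution with $k^\star$ vertices only certifies a \textsc{Set Cover} optimum of order $(k^\star)^2$, so greedy alone yields an approximation factor of order $k^\star\ln n$, which is useless when $k^\star$ is large. I would remove this dependence by balancing the greedy output against the trivial solution: by Lemma~\ref{lemma:deg2}, $V(G)\setminus\{u\}$ is a centroidal locating set for any vertex $u$ of degree at least~$2$, giving a feasible solution of size $n-1$. The algorithm simply outputs whichever of the two solutions is smaller, so its size is at most $\min\{c(k^\star)^2\ln n,\ n-1\}$ for some absolute constant $c$; in particular it never needs to know $k^\star$.

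Finally I would verify the claimed ratio by a case split on $k^\star$, used only in the analysis. Writing the approximation factor as $\min\{c(k^\star)^2\ln n,\ n\}/k^\star$, when $k^\star\le\sqrt{n/(c\ln n)}$ the greedy term dominates and the factor is at most $ck^\star\ln n\le\sqrt{cn\ln n}$; when $k^\star>\sqrt{n/(c\ln n)}$ the trivial term dominates and the factor is at most $n/k^\star<\sqrt{cn\ln n}$. In both regimes the factor is $O(\sqrt{n\ln n})$, with the threshold $\sqrt{n/\ln n}$ being exactly the point where $(k^\star)^2\ln n$ meets $n$, which is what forces the $\sqrt{n}$ behaviour of the bound.
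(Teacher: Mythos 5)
Your proposal is correct and follows essentially the same route as the paper's proof: the identical set-cover reduction over the universe $\binom{V(G)}{2}$ with one hyperedge per unordered pair of potential detector vertices, the same two inequalities (a cover of size $|\mathcal C|$ yields a centroidal locating set of size at most $2|\mathcal C|$, and an optimal basis of size $k^\star$ certifies a cover of size at most $\binom{k^\star}{2}$), and the same balancing of the greedy output against the trivial all-vertices solution with threshold $\sqrt{n/\ln n}$. The only cosmetic difference is that you make the final algorithm explicit (output the smaller of the two candidate solutions, so no knowledge of $k^\star$ is needed), which the paper leaves implicit in its case analysis.
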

\begin{proof}
Using Observation~\ref{obs:otherdef}, finding a centroidal locating
set is equivalent to finding a set of \emph{pairs} of vertices
which identifies each pair of vertices in $G$ --- where a pair $b_1,b_2$
identifies $x,y$ if $d(x,b_1)\leq d(x,b_2)$ but $d(y,b_1)>d(y,b_2)$,
or $d(y,b_1)\leq d(y,b_2)$ and $d(x,b_1)>d(x,b_2)$.

Let $G$ be a graph on $n$ vertices. We define the hypergraph
$H=H_{CD}(G)$ on vertex set $V(H)=\binom{V(G)}{2}$. For each pair
$\{x,y\}$ of vertices of $G$, $H$ has a hyperedge $E_{x,y}$ that
contains all pairs that are identified by $\{x,y\}$.

Let $C$ be a set cover of $H$. Using our previous observation, one can
construct a centroidal locating set $B(C)$ of $G$ by taking the union
of all elements in the pairs that correspond to hyperedges in $C$:
$B(C)=\{x\in V(G)~|~\exists y\in V(G), E_{x,y}\in C\}$. Indeed, every
pair of vertices of $G$ is identified by a pair corresponding to a
hyperedge of $H$. Hence we have:
\begin{align}
|B(C)|\leq 2|C|.\label{eq:approx1}
\end{align}

For the other direction, given a centroidal basis $B$ of $G$, one can
construct a set cover of $H$ consisting of all $\binom{|B|}{2}$ pairs
of vertices of $B$: each pair $u,v$ in $V(G)$ is identified by some
pair $x,y$ in $B$, hence vertex $\{u,v\}$ in $H$ is covered by the
corresponding hyperedge $E_{x,y}$. Denoting by $\SC(H)$ the size of an
optimal set cover of $H$, this implies:
\begin{align}
\SC(H)\leq\binom{|B|}{2}\leq\binom{\M(G)}{2}.\label{eq:approx2}
\end{align}

Now, in order to approximate \textsc{Centroidal Dimension},
we construct $H=H_{CD}(G)$ from $G$, and apply the standard approximation
algorithm for \textsc{Minimum Set Cover}~\cite{J74} to get a set cover $C$ of $H$
of size $O\left(\ln n\cdot\SC(H)\right)$. By
Inequalities~\eqref{eq:approx1} and~\eqref{eq:approx2}, we get a
centroidal locating set $B(C)$ of size at most
$2|C|=O\left(\ln n\cdot\SC(H)\right)=O\left(\ln n\cdot\M(G)^2\right)$.

Now, if $\M(G)\geq\sqrt{\frac{n}{\ln n}}$, we have a trivial
$O\left(\sqrt{n\ln n}\right)$-approximation by selecting all vertices
as a solution, since $n= O\left(\sqrt{n\ln
  n}\cdot\M(G)\right)$.

If $\M(G)\leq\sqrt{\frac{n}{\ln n}}$,
$|B(C)|=O\left(\ln n\cdot\M(G)\cdot\sqrt{\frac{n}{\ln n}}\right)=O\left(\sqrt{n\ln n}\cdot\M(G)\right)$.
\end{proof}

We note that the quadratic dependence between $\SC(H)$ and $\M(G)$ is
necessary for Inequality~\eqref{eq:approx2} in the reduction of
Theorem~\ref{thm:sqrtn-approx}. Indeed, when we considered the case of
cycles in Section~\ref{sec:paths-cycles}, we had $n$ special pairs to
distinguish using other pairs $b_1,b_2$ from $B$, but any pair
$b_1,b_2$ could only distinguish a small (constant) number of these
$n$ pairs. However, we could build a centroidal locating set $B$ of
size $O\left(\sqrt{n}\right)$, meaning that a large fraction of the
pairs from $B$ were indeed necessary to distinguish the pairs. Hence
this would lead to a set cover of $H$ of size $\Omega(|B|^2)$ in our
reduction. This suggests that one cannot improve the approximation
ratio from Theorem~\ref{thm:sqrtn-approx} for \textsc{Centroidal
  Dimension} by using our reduction. Hence we ask the following
question:

\begin{question}\label{qu:approx}
What is the exact approximation complexity of \textsc{Centroidal Dimension}?
\end{question}

We close the section with a remark on the parameterized complexity of
\textsc{Centroidal Dimension}: this problem is
fixed-parameter-tractable with parameter $k$, the size of the
solution, that is, it admits an algorithm of running time
$f(k)n^{O(1)}$ for some computable function~$f$:

\begin{proposition}\label{prop:FPT}
\textsc{Centroidal Dimension} is fixed-parameter-tractable
when parameterized by the size of the solution.
\end{proposition}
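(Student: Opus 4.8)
The plan is to exploit the fact, already established in the proof of Theorem~\ref{thm:LB}, that the solution size controls the order of the graph. Specifically, if $\M(G)\leq k$, then a centroidal locating set $B$ with $|B|\leq k$ assigns to each vertex a distinct ordered partition of $B$; since a $k$-element set has exactly $b(k)$ ordered partitions (the $k$-th ordered Bell number), this forces $n\leq b(k)$. Equivalently, whenever $n>b(k)$ we may immediately conclude that $\M(G)>k$. This single observation is what drives the whole argument.

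First I would turn this into a trivial kernelization. Given an instance $(G,k)$, compute $b(k)$ (a function of $k$ alone) and compare it with $n$. If $n>b(k)$, report that no centroidal locating set of size at most $k$ exists. Otherwise $G$ has at most $b(k)$ vertices, so the entire instance now has size bounded by a function of~$k$.

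On such a bounded instance I would simply enumerate all subsets $B\subseteq V(G)$ with $|B|\leq k$; there are at most $\sum_{i=0}^{k}\binom{b(k)}{i}$ of them, which is again a function of $k$. For each candidate $B$ I would verify in polynomial time whether it is a centroidal locating set: compute all pairwise distances (for instance by running a breadth-first search from each vertex), derive for every vertex $x$ its ordered partition $r(x)$ of $B$, and check that these $n$ ordered partitions are pairwise distinct (Observation~\ref{obs:otherdef} gives a convenient pairwise reformulation of this test). Returning the smallest valid $B$ settles the optimization version as well. Altogether the running time has the form $f(k)+n^{O(1)}$ for a computable function $f$, which is exactly the required FPT bound.

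There is essentially no hard step here: the argument rests entirely on the observation that the parameter bounds the number of vertices, after which brute-force search suffices. The only point requiring care is confirming that verifying a single candidate set runs in polynomial time, which follows since all-pairs distances and the induced ordered partitions can be computed and compared efficiently.
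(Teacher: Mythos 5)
Your proposal is correct and follows essentially the same route as the paper: bound the order by the ordered Bell number $b(k)$ (answering NO when $n>b(k)$), then brute-force over all subsets of size at most $k$, checking each candidate in polynomial time via all-pairs distances. The only cosmetic difference is that you phrase the size bound explicitly as a kernelization, while the paper folds it directly into the running-time analysis.
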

\begin{proof}
We know that the order $n$ of a graph with a centroidal locating set
of size $k$ is at most $b(k)$, hence, if the input has more thatn
$b(k)$ vertices, we answer NO. Otherwise, we enumerate all
$\binom{n}{k}$ subsets of vertices of size~$k$ to check if one of them
is a centroidal locating set (checking whether a given set is
centroidal locating can be done in time $n^{O(1)}$). This algorithm
has running time $\binom{n}{k}n^{O(1)}\leq n^{k+O(1)}=b(k)^{k+O(1)}$,
which is computable and only depends on $k$.
\end{proof}

In contrast to Proposition~\ref{prop:FPT}, it was proved by Hartung
and Nichterlein~\cite{HN} that deciding whether there is a solution of
size $k$ for \textsc{Metric Dimension} is highly unlikely to
be solvable by an algorithm of running time $n^{o(k)}$.


\vspace{0.5cm}
\noindent\textbf{Acknowledgements.} We thank the anonymous referees
for carefully reading the paper, especially for detecting a flaw in
the original proof of Theorem~\ref{thm:bounded-nr-paths} and in an
erroneous construction that was replaced by Theorems~\ref{thm:diam3}
and~\ref{thm:diam3-construct} (which were suggested by one of the
referees).


\begin{thebibliography}{11}

\bibitem{BC} R. F. Bailey and P. J. Cameron, Base size, metric dimension and other invariants of groups and graphs, Bull Lond Math Soc 43 (2011), 209--242.

\bibitem{Carson} D. I. Carson, On generalized location domination, Graph Theory, Combinatorics and Applications: Proc 7th Quadrennial International Conference on the Theory and Applications of Graphs, Vol. 1, 1995, pp. 161--179.

\bibitem{CEJO00} G.~Chartrand, L.~Eroh, M.~Johnson, and O.~Oellermann, Resolvability in graphs and the metric dimension of a graph, Discrete Appl Math 105(1-3) (2000), 99--113.

\bibitem{DGMP11} O. Delmas, S. Gravier, M. Montassier, and A. Parreau, On two variations of identifying codes, Discrete Math 311(17) (2011), 1948--1956.


\bibitem{F} F.~Foucaud, Combinatorial and algorithmic aspects of identifying codes in graphs, PhD thesis, Universit\'{e} Bordeaux 1, Bordeaux, France, 2012. Available online at \url{http://tel.archives-ouvertes.fr/tel-00766138}.

\bibitem{F13a} F.~Foucaud, The complexity of the identifying code problem in restricted graph classes, Proc 24th International Workshop on Combinatorial Algorithms, Lecture Notes in Computer Science, Vol. 8288, Springer, 2013, pp. 150--163.

\bibitem{F13} F.~Foucaud, On the decision and approximation complexities for identifying codes and locating-dominating sets in restricted graph classes, Manuscript (2013). Available online at \url{http://hal.archives-ouvertes.fr/hal-00923356}.



\bibitem{GKM08} S. Gravier, R. Klasing, and J. Moncel, Hardness results and approximation algorithms for identifying codes and locating-dominating codes in graphs, Algorithmic Oper Res 3(1) (2008), 43--50.

\bibitem{H64} S.~L.~Hakimi, Optimal distribution of switching centers and the absolute centers and medians of a graph, Oper Res 12 (1964), 462--475.

\bibitem{H59} F. Harary,  Status and contrastatus, Sociometry 22 (1959), 23--43.


\bibitem{HM76} F.~Harary and R.~A.~Melter, On the metric dimension of a graph, Ars Combin 2 (1976), 191--195.

\bibitem{HN} S. Hartung and A. Nichterlein, On the parameterized and  approximation hardness of metric dimension, Proc 2013 IEEE Conference on Computational Complexity, 2013, pp. 266--276.



\bibitem{HHH06} T.~W.~Haynes, M.~A.~Henning, and J.~Howard, Locating and total dominating sets in trees, Discrete Appl Math 154(8) (2006), 1293--1300.


\bibitem{HMMPSW10} M. C. Hernando, M. Mora, I. M. Pelayo, C. Seara, and D. R. Wood, Extremal graph theory for metric dimension and diameter, Electr J Comb 17(1) (2010), \#R30.


\bibitem{HLR02} I. Honkala, T. Laihonen, and S. Ranto, On strongly identifying codes, Discrete Math 254 (2002), 191--205.

\bibitem{J74} D.~S.~Johnson, Approximation algorithms for combinatorial problems, Journal Comput System Sci 9 (1974), 256--278.

\bibitem{J69} C. Jordan, Sur les assemblages de lignes (in French), J Reine Angew Math 70(2) (1869), 185--190.


\bibitem{KCL98} M.~G.~Karpovsky, K.~Chakrabarty, and L.~B.~Levitin, On a new class of codes for identifying vertices in graphs, IEEE Trans Inform Theor 44 (1998), 599--611.

\bibitem{KRR96} S. Khuller, B. Raghavachari, and A. Rosenfeld, Landmarks in graphs, Discrete Appl Math 70 (1996), 217--229.


\bibitem{biblio} A.~Lobstein, Watching systems, identifying, locating-dominating and discriminating codes in graphs: a bibliography, webpage. \url{http://www.infres.enst.fr/~lobstein/debutBIBidetlocdom.pdf}

\bibitem{sloane} OEIS Foundation Inc, The On-Line Encyclopedia of Integer Sequences, webpage. \url{http://oeis.org/A000670}.

\bibitem{ST04} A. Seb\H{o} and E. Tannier, On metric generators of graphs, Math Oper Res 29(2) (2004), 383--393.

\bibitem{SS10} S.~J.~Seo and P.~J.~Slater, Open neighborhood locating-dominating sets, Australasian J Combin 46 (2010), 109--120.

\bibitem{SS11} S.~J.~Seo and P.~J.~Slater, Open neighborhood locating–dominating in trees, Discrete Appl Math 159 (2011), 484--489.

\bibitem{S75} P.~J.~Slater, Leaves of trees, Congr Numer 14 (1975), 549--559.

\bibitem{S75b} P. J. Slater, Maximin facility location, J Res Nat Bur Stand Ser B 79 (1975), 107--115.

\bibitem{S80} P. J. Slater, Medians of arbitrary graphs, J Graph Theory 4 (1980), 389--392.


\bibitem{S87} P.~J.~Slater, Domination and location in acyclic graphs, Networks 17(1) (1987), 55--64.

\bibitem{S88} P.~J.~Slater, Dominating and reference sets in a graph, J Math Phys Sci 22(4) (1988), 445--455.


\bibitem{S99} P. J. Slater, A survey of sequences of central subgraphs, Networks 34(4) (1999), 244--249.

\bibitem{SS99} C. Smart and P. J. Slater. Center, median, and centroid subgraphs, Networks 34(4) (1999), 303--311.

\bibitem{orderedBell} H. S. Wilf, ``generatingfunctionology'', A. K. Peters Ltd, Wellesley, MA, third edition, 2006.

\bibitem{Z68} B. Zelinka, Medians and peripherians of trees, Arch Math 4(2) (1968), 87--95.

\end{thebibliography}
\end{document}